\documentclass[reqno,10pt]{amsart}

\usepackage{url}
\usepackage{xcolor}
\definecolor{darkgreen}{rgb}{0,0.5,0}

\usepackage[T1]{fontenc}
\usepackage{amsmath,amsfonts,amssymb,amsthm}
\usepackage{mathtools}
\usepackage{comment}
\usepackage{pgf,tikz}
\usetikzlibrary{matrix,arrows,calc}
\usepackage{tikz-cd}
\usepackage{enumitem}
\usepackage[colorinlistoftodos]{todonotes}
\usepackage{cancel}
\usepackage{lmodern}
\usepackage{aligned-overset}
\usepackage{adjustbox}
\usepackage{footnote}
\makesavenoteenv{tabular}

\usepackage[
	backend=bibtex,  %
	style=alphabetic,
	minalphanames=3,
	maxnames=99,
	maxalphanames=4,
	giveninits=true,
	isbn=false,
	doi=true,
]{biblatex}
\usepackage[
	colorlinks, 
	citecolor=darkgreen,
]{hyperref}
\usepackage{cleveref}

\newcommand{\sm}{\mathrm{sm}}

\numberwithin{equation}{section}

\newtheorem{thmABC}{Theorem}

\newtheorem{thm}{Theorem}

\theoremstyle{remark}
\newtheorem{rem}[thm]{Remark}

\newtheorem{example}[thm]{Example}

\theoremstyle{definition}
\newtheorem{defn}[thm]{Definition}

\numberwithin{thm}{section}

\AddToHook{env/defn/begin}{\crefalias{thm}{defn}}
\AddToHook{env/prop/begin}{\crefalias{thm}{prop}}
\AddToHook{env/lemma/begin}{\crefalias{thm}{lemma}}
\AddToHook{env/conjecture/begin}{\crefalias{thm}{conjecture}}
\AddToHook{env/cor/begin}{\crefalias{thm}{cor}}
\AddToHook{env/rem/begin}{\crefalias{thm}{rem}}
\AddToHook{env/remarks/begin}{\crefalias{thm}{remarks}}
\AddToHook{env/example/begin}{\crefalias{thm}{example}}
\AddToHook{env/constr/begin}{\crefalias{thm}{constr}}

\newcommand{\cB}{\mathcal B}
\newcommand{\bC}{\mathbb C}

\newcommand{\cC}{\mathcal C}

\newcommand{\cpt}{\mathrm{cpt}}
\newcommand{\rd}{\mathrm d}

\newcommand{\cF}{\mathcal F}

\newcommand{\bG}{\mathbb G}

\newcommand{\length}{\mathrm{length}}

\newcommand{\rH}{\mathrm H}

\newcommand{\fp}{\mathfrak p}
\newcommand{\fq}{\mathfrak q}

\newcommand{\bP}{\mathbb P}

\newcommand{\cP}{\mathcal P}
\newcommand{\bQ}{\mathbb Q}
\newcommand{\cQ}{\mathcal Q}

\newcommand{\bZ}{\mathbb Z}
\newcommand{\cO}{\mathcal{O}}
\newcommand{\cX}{\mathcal{X}}
\newcommand{\cA}{\mathcal{A}}
\newcommand{\cY}{\mathcal{Y}}

\newcommand{\cG}{\mathcal{G}}

\newcommand{\bR}{\mathbb{R}}

\newcommand{\fS}{\mathfrak{S}}

\newcommand{\Tr}{\mathrm{Tr}}

\newcommand{\csp}{{\mathrm{csp}}}

\renewcommand{\div}{\mathrm{div}}

\newcommand{\Kbar}{\overline{K}}
\newcommand{\Kpbar}{\overline{K}_{\fp}}

\DeclareMathOperator{\AJ}{AJ}

\DeclareMathOperator{\cD}{\mathcal{D}}

\DeclareMathOperator{\Div}{Div}

\DeclareMathOperator{\bF}{\mathbb{F}}

\DeclareMathOperator{\dR}{dR}

\DeclareMathOperator{\Jac}{Jac}

\DeclareMathOperator{\Lie}{Lie}

\DeclareMathOperator{\Res}{Res}

\DeclareMathOperator{\Hom}{Hom}

\DeclareMathOperator{\rank}{\mathrm{rk}}

\DeclareMathOperator{\Sel}{Sel}
\DeclareMathOperator{\Spec}{Spec}

\DeclareMathOperator{\ord}{ord}

\title{Affine Chabauty II}
\author{Marius Leonhardt}
\address{Marius Leonhardt,
	Department of Mathematics and Statistics,
	Boston University,
	665 Commonwealth Ave,
	Boston, MA 02215,
	USA}
\email{mleonhar@bu.edu}

\author{Martin Lüdtke}
\address{Martin Lüdtke,
	Institut für Mathematik,
	Carl von Ossietzky Universität Oldenburg,
	26111 Oldenburg,
	Germany
}
\email{martin.luedtke@uol.de}

\begin{document}
	
	\thispagestyle{empty}
	
	\begin{abstract}
		We present an algorithm for determining the set of $S$-integral points on an affine curve based on the Affine Chabauty method developed in the first part of this series.
		We achieve this by constructing explicit logarithmic differentials whose integrals take on prescribed values on $S$-integral points.
		Along the way, we prove a $p$-adic residue theorem for Coleman integrals of log differentials.
	\end{abstract}
	
	\maketitle
	\tableofcontents

\section{Introduction}
\label{sec: introduction}

Let $K$ be a number field and $S$ a finite set of primes of $\mathcal{O}_K$.
Let $Y/K$ be a smooth affine curve and let $\cY/\cO_{K,S}$ be a
regular model of $Y$ over the ring of $S$-integers of $K$.
If $Y$ is hyperbolic, the set of $S$-integral points $\cY(\cO_{K,S})$ is finite by the theorems of Siegel, Mahler, and Faltings.
However, finding the points in $\cY(\cO_{K,S})$ remains a difficult open problem in general.
In \cite{affchab1} we presented a new approach addressing this question for curves satisfying the hypothesis
\begin{equation}
\label{eq:chabauty-condition-over-number-field-intro}
r + \# S + ([K : \bQ]-1)n < g + \rank \cO_K^\times + \#|D| + n_2(D) - 1,
\end{equation}
an inequality involving the genus $g$, the Mordell--Weil rank $r$, the cardinality of~$S$, the degree and unit rank of $K$, and invariants $n=\# D(\overline{K})$ and $n_2(D) = \frac12\# (D(\bC) \smallsetminus  D(\bR))$ coming from the boundary $D$ of $Y$.
Our method is an $S$-integral analogue of the method of Chabauty--Coleman \cite{Cha41, Col85}, and a refinement of Skolem's method \cite{skolem} in the genus~0 case.
For an auxiliary prime $\fp \not\in S$, the set $\cY(\cO_{K,S})$ is contained in a finite union of subsets of $\cY(\cO_{\fp})$, each of which is defined by the $\fp$-adic integral of a logarithmic differential (i.e.\ meromorphic differential on the compactification of~$Y$ with at worst simple poles at $D$) taking on a prescribed value.
In this article we present an algorithm to determine these log differentials and the possible values of their integrals in practice, thus computing a finite subset of $\cY(\cO_{\fp})$ containing $\cY(\cO_{K,S})$.

\subsection{Main results}

Let $Y/K$ be given as $X\smallsetminus D$ where $X/K$ is a smooth projective curve and $D \neq \emptyset$ is a finite, reduced, closed subscheme.
The closed points of $D$ are called \emph{cusps} and we denote the underlying set of $D$ by $|D|$. 
Let $S$ be a finite set of primes of~$K$.
Let~$\cX$ be a regular model of $X$ over the ring~$\cO_{K,S}$ of $S$-integers, i.e.\ a regular, flat, projective $\cO_{K,S}$-scheme with an isomorphism $\cX_{K} \cong X$ \cite[Definition~10.1.1]{liu2006algebraic}.
Let $\cD$ be the closure of~$D$ in~$\cX$ and set $\cY \coloneqq \cX \smallsetminus \cD$. Assume that $Y(K) \neq\emptyset$ and fix a base point $P_0 \in Y(K)$. 
We use the following notation, which will be kept throughout this paper:
\begin{itemize}
	\item $r \coloneqq \rank J(K)$ the Mordell--Weil rank of the Jacobian $J$ of~$X$;
	\item $g$ the genus of~$X$;
	\item $\#|D| > 0$ the number of cusps;
	\item $n \coloneqq \#D(\overline{K})>0$ the number of geometric cusps;%
	\item $[K:\bQ]n = n_1(D) + 2n_2(D)$ with $n_1(D) \coloneqq \#D(\bR)$ the number of real cusps and $n_2(D) = \frac12\# (D(\bC) \smallsetminus  D(\bR))$ the number of conjugate pairs of complex cusps.
	Here $D$ is viewed as a scheme over $\bQ$ (rather than over $K$).
\end{itemize}

We partition the $S$-integral points
\[
	\cY(\cO_{K,S}) = \coprod_{\Sigma} \cY(\cO_{K,S})_{\Sigma},
\]
where $\Sigma$ runs through the finitely many $S$-integral \emph{reduction types} of $\cY$, see \Cref{sec:redtypes}. Roughly speaking, the reduction type of an $S$-integral point specifies, for each prime~$\ell$, the component of the mod-$\ell$ fibre or, if $\ell \in S$, possibly the cusp onto which the point reduces.
In \cite[Proposition 3.13]{affchab1} we showed that the Abel--Jacobi image of $\cY(\cO_{K,S})_{\Sigma}$ inside the generalised Jacobian $J_Y(K)$ is contained in the Selmer set $\Sel(P_0,\Sigma)$, which is empty or a translate of a finitely generated abelian group of controllable rank and is defined using arithmetic intersection theory and the $D$-intersection map $\sigma\colon J_Y(K) \to V_D$, see \Cref{sec:affchab1}.
Let $\omega_1,\ldots,\omega_{g+n-1}$ be a basis of the space $\rH^0(X,\Omega^1(D))$ of logarithmic differentials on $(X,D)$ with $\omega_1,\ldots,\omega_g$ holomorphic. Assume $n \geq 2$. 
Our main result is:

\begin{thmABC}\label{thm:A}
	Let $\Sigma$ be an $S$-integral reduction type with cuspidal part $\Sigma^{\csp}$ and let $\fp \not\in S$ be a prime of $K$.
	Suppose $(a_1,\ldots,a_{g+n-1}) \in K_{\fp}^{g+n-1}$ is contained in the kernel of the matrix $M(\Sigma^{\csp})$ defined in \eqref{eq:MSigma}--\eqref{eq:matrixD}, let $\omega \coloneqq a_1 \omega_1 + \dots + a_{g+n-1} \omega_{g+n-1}$, and let $c \coloneqq c(P_0,\Sigma,\omega)$ be the constant defined by
	\eqref{eq:constants} and \eqref{eq:b-lambda}.
	Then
	\[
		\int_{P_0}^P \omega = c \quad \text{ for all } P \in \cY(\cO_{K,S})_\Sigma.
	\]
\end{thmABC}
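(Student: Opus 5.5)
We will deduce the statement from the Selmer set containment of \cite[Proposition 3.13]{affchab1}, together with a description of the logarithmic abelian integral as a homomorphism on the generalised Jacobian. The map $P \mapsto \int_{P_0}^P \omega$ on $\cY(\cO_{\fp})$ factors through the Abel--Jacobi map $\AJ\colon Y \to J_Y$. It is given by a $\fp$-adic logarithm $\log_\omega\colon J_Y(K_\fp) \to K_\fp$, i.e.\ a continuous homomorphism, which is the pairing of $\omega \in \rH^0(X,\Omega^1(D)) \cong \Lie(J_Y)^\vee$ with $\log_{J_Y}$. So it suffices to show that $\log_\omega$ is constant, equal to $c$, on the Selmer set $\Sel(P_0,\Sigma) \subseteq J_Y(K)$. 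We may assume $\Sel(P_0,\Sigma)\neq\emptyset$, since otherwise the statement is vacuous. Then $\Sel(P_0,\Sigma) = Q_\Sigma + G_\Sigma$ for a point $Q_\Sigma$ and a finitely generated subgroup $G_\Sigma \subseteq J_Y(K)$, described via the $D$-intersection map $\sigma\colon J_Y(K)\to V_D$. We need two things: $\log_\omega(G_\Sigma) = 0$, and $\log_\omega(Q_\Sigma) = c$.

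\textbf{Vanishing on $G_\Sigma$.} Use the extension $0 \to T \to J_Y \to J \to 0$ with $T$ the torus $\Res_{D/K}\bG_m/\bG_m$. The group $G_\Sigma$ is, up to finite index (which is harmless since $\log_\omega$ has torsion-free target), generated by two kinds of elements. The first are lifts of generators of $J(K)$ adjusted to lie in the required $\sigma$-fibre. The second are elements of $T(K)$ coming from $S$-units, constrained by $\Sigma^{\csp}$, which fixes the allowed valuations at the cusps at primes in $S$. I would check that the rows of $M(\Sigma^{\csp})$ in \eqref{eq:MSigma}--\eqref{eq:matrixD} are exactly the values $\log_{\omega_i}$ on a chosen generating set. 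The block \eqref{eq:matrixD} encodes the toric part: for a unit-type element $u\in T(K)$, $\log_{\omega}(u) = \sum_{x\in D}\Res_x(\omega)\log_\fp(u_x)$. The remaining rows give the values on the lifts of Mordell--Weil generators. Then $a\in\ker M(\Sigma^{\csp})$ gives $\log_\omega|_{G_\Sigma}=0$ by linearity.

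\textbf{Computing the constant.} For $Q_\Sigma$, write it as a divisor class built from $P_0$ and the cusp/component data of $\Sigma$. Then compute $\log_\omega(Q_\Sigma)$ as a sum of Coleman integrals of $\omega$ between auxiliary points, plus toric correction terms $b_\lambda$. This should reproduce \eqref{eq:constants} and \eqref{eq:b-lambda}. The $p$-adic residue theorem for Coleman integrals of log differentials, proved later in the paper, will be the tool that makes the toric contributions explicit and independent of choices.

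I expect the main obstacle to be this last identification. One must pin down the toric part of $\log_\omega$ precisely, with branch of $\log_\fp$ and normalisation matching those used for Coleman integration. One must also check that the resulting $c$ is independent of the representative $Q_\Sigma$. The latter reduces to the vanishing on $G_\Sigma$ already shown. The former I would handle by computing $\log_\omega$ on $T(K_\fp)$ via the residue theorem applied to $\omega\cdot\log f$ for a function $f$ with divisor supported on $D$.
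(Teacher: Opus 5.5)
Your architecture matches the paper's. You use the Selmer containment of \cite[Proposition 3.13]{affchab1}, then show that $\log_\omega$ kills the direction $W$ of $\Sel_{\bQ}(P_0,\Sigma)$ and equals $c$ on the translate; that second part is \Cref{thm:explicit-function}. The gap is the step you flag as the main obstacle: a concrete idea is missing there, and the route you sketch is not how \eqref{eq:constants} arises. The constant $c$ involves only residues and $\log\varphi(\pi_\lambda)$, with no Coleman integrals. A representative ``built from $P_0$ and the cusp/component data'', evaluated as Coleman integrals between auxiliary points, will not visibly produce it.

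The missing idea is that the translate can be chosen inside the torus. Take $\beta\in k(X)^\times\otimes\bQ$ with $\div(\beta)$ supported on $Y$ and $\beta(Q)=\prod_\lambda\pi_\lambda^{b_\lambda}$ for every cusp $Q$. Since $\sigma(\div f)=\sum_{Q,\lambda}v_\lambda(f(Q))[\lambda]$, one gets $\sigma(\div\beta)=b$. Hence every $F$ with $\sigma(F)\in b+U$ differs from $\div(\beta)$ by an element of $W$, and \Cref{thm:residue-theorem-general} gives $\int_{\div(\beta)}\omega=c$ directly.

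Relatedly, your last sentence has the support condition backwards. An element $(t_Q)_Q$ of the torus corresponds to $\div(f)$ with $\div(f)$ disjoint from $D$ and $f(Q)=t_Q$, not with divisor supported on $D$. With that correction, your toric formula for $\log_\omega$ is exactly the residue theorem already available, so no separate computation on $T(K_\fp)$ is needed.

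The vanishing on $W$ is likewise only asserted (``I would check''), and your description of the blocks is inaccurate:
\begin{itemize}
\item Block $C$, not $D(U)$, comes from the units $e_i$.
\item Block $D(U)$ comes from the toric elements $(\prod_\lambda\pi_\lambda^{u_{i,\lambda}})_Q$, whose $\sigma$-image is $u_i$.
\item The rows $(A\mid B)$ are the values not on $G_i$ itself but on $G_i$ plus the toric element $(\prod_\lambda\pi_\lambda^{-i_\lambda(\Psi_\fq(G_i),\tilde\cQ)})_Q$. This moves the lift into $\sigma^{-1}(0)$, and it is exactly where the correction term in $H(G_i,\omega)$ comes from.
\end{itemize}
To turn this into an argument you must also handle two ambiguities. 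First, $f$ is only determined up to a constant $\kappa\in K^\times\otimes\bQ$. Second, since $V_D$ is a quotient by $\pi^*Z_0(\Spec\cO_K)$, the valuations $v_\lambda(f(Q))$ are only recovered from $\sigma$ up to $z\,e(\lambda|\fq)$. Both contribute zero because of the normalisation \eqref{eq:pi-lambda-assumption} and $\sum_{Q,\varphi}\varphi(\Res_Q\omega)=0$. The paper packages all of this by writing any $F\in W$ as $\sum_i x_i(F)G_i+\div(f)$ and factoring $(f(Q))_Q$ into the $e_i$ and the $\pi_\lambda$. It then reads off $\int_F\omega$ as a linear form in the coordinates $(x_i,a_i,t_i)$, whose coefficients are the columns of $M(U)$.
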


Thus $\cY(\cO_{K,S})_{\Sigma}$ is contained in the set of zeros of the $\fp$-adic analytic function $\cY(\cO_{\fp}) \to K_{\fp}$ that sends $P$ to $\int_{P_0}^P \omega - c$.
The constant $c(P_0,\Sigma,\omega)$ and the entries of the matrix $M(\Sigma^{\csp})$ are given by explicit formulas in terms of $\fp$-adic integrals of the basis differentials $\omega_j$, the residues of the $\omega_j$ at the cusps, the $\fp$-adic logarithm of certain elements of the residue fields of the cusps, and intersection numbers on the regular model~$\cX$.

To prove \Cref{thm:A}, let $\log_{J_Y}\colon J_Y(K_{\fp}) \to \rH^0(X_{K_{\fp}},\Omega^1(D))^\vee$ be the logarithm of the $\fp$-adic Lie group $J_Y(K_{\fp})$, which we restrict to the direction $W$ of $\Sel(P_0,\Sigma)$.
We show that the matrix $M(\Sigma^{\csp})$ represents the dual map $\log_{J_Y}^\sharp \colon \rH^0(X_{K_{\fp}}, \Omega^1(D)) \to W^\vee\otimes_{\bQ} K_{\fp}$ with respect to a certain explicit basis of $W$.
This basis is constructed in \eqref{eq:F-and-Gi}, \eqref{eq:f-of-Q} and \eqref{eq:t-ell} and consists of dual bases of the Mordell--Weil group $J(K)$ of the Jacobian of $X$, of the unit groups at the cusps, and of a basis of $\sigma(W)=U$.
For $F\in W$ and $\omega$ a logarithmic differential, the fundamental calculation of $\log_{J_Y}^\sharp(\omega)(F) = \int_F \omega$ reduces to calculating $\int_{\div(f)} \omega$ for a certain rational function~$f$.
A $p$-adic residue theorem, proved in \Cref{sec:integration}, expresses $\int_{\div(f)} \omega$ in terms of the residues of $\omega$ and the values of $f$ at the cusps, and the latter can be recovered from $\sigma(F)$ up to units.
The main insight is how to combine the $D$-intersection map $\sigma$ with $\fp$-adic integration.

\subsection{Affine Chabauty algorithm}
\label{sec:algorithm}

\Cref{thm:A} is the central step in building an algorithm based on the Affine Chabauty method.
That is, on input an affine curve $Y=X\smallsetminus D$, a finite set of primes $S$ satisfying \eqref{eq:chabauty-condition-over-number-field-intro}, a regular model $\cY = \cX \smallsetminus \cD$ over $\cO_{K,S}$, and an auxiliary prime $\fp \not\in S$, we want to determine a finite subset of $\cY(\cO_{\fp})$ containing $\cY(\cO_{K,S})$.
Let us give a sketch of how such an algorithm proceeds:
\begin{enumerate}[label=(\arabic*)]
	\item \label{item:algo-D-transversal-model}
	Extend $\cX$ to a regular model over~$\cO_K$. Ensure that the model is $D$-transversal over primes in~$S$ via blow-ups if necessary (see Section~\ref{sec:redtypes}).
	\item \label{item:algo-intersection-numbers}
	For each prime~$\fq$ of bad reduction for~$\cX$, determine the set of components and the intersection matrix of the mod-$\fq$ fibre~$\cX_{\fq}$. 
	\item \label{item:algo-mw-basis}
	Find divisors $G_i\in \Div^0(Y)$ generating a full rank subgroup of the Mordell--Weil group $J(K)$.
	\item \label{item:algo-correction-divisors}
	Using \cite[Lemma~2.7]{affchab1}, determine the vertical correction divisors $\Phi_{\fq}(G_i)$ for each prime $\fq$ of $K$. This involves the Moore--Penrose pseudoinverse of the intersection matrix of the mod-$\fq$ fibre computed in step~\ref{item:algo-intersection-numbers}.
	\item \label{item:algo-log-differentials}
	Find a basis $\omega_1,\ldots,\omega_{g+n-1}$ of the space of log differentials $\rH^0(X,\Omega^1(D))$ and compute their residues $\Res_Q(\omega_j) \in k(Q)$ at all cusps $Q \in |D|$.
	\item \label{item:algo-units}
	For each $Q\in |D|$, find $\varepsilon_i \in \cO_{k(Q)}^\times$ generating a full rank subgroup of $\cO_{k(Q)}^\times$.
	\item \label{item:algo-prime-ideal-generators}
	For finitely many ``special'' primes $\fq$ of $K$, determine generators $\rho_{\fq}$ of $\fq$ in $K^\times\otimes {\bQ}$.
	Similarly, for $Q\in |D|$ and finitely many ``special'' primes $\lambda$ of $k(Q)$, determine generators $\pi_{\lambda}$ of $\lambda$ in $k(Q)^\times\otimes {\bQ}$.
	\item \label{item:algo-integrals}
	Find a method for calculating $\fp$-adic integrals of log differentials on $X$.
	\item \label{item:algo-chabauty-function}
	For each $S$-integral reduction type~$\Sigma$, determine the matrix $M(\Sigma^{\csp})$ and find a non-trivial element $(a_1,\ldots,a_{g+n-1})$ in the kernel. Compute the constant $c=c(P_0,\Sigma,\omega)$ for the  Chabauty differential $\omega = \sum_j a_j \omega_j$.
	\item \label{item:algo-chabauty-locus}
	On each residue disc, expand the $\fp$-adic analytic function $P \mapsto \int_{P_0}^P \omega$ into a power series and determine the locus where $\int_{P_0}^P \omega=c$.
\end{enumerate}

\Cref{thm:A} guarantees that the union of the sets obtained in step~\ref{item:algo-chabauty-locus} over all $S$-integral reduction types~$\Sigma$ is a finite subset of $\cY(\cO_{\fp})$ containing $\cY(\cO_{K,S})$. The central step and main focus of this article is~\ref{item:algo-chabauty-function}. The results of all prior steps enter in the computation of the matrices $M(\Sigma^{\csp})$ and the constants $c(P_0,\Sigma,\omega)$, as described in detail in Section~\ref{sec:explicit}.

In individual examples, calculating intersection numbers \ref{item:algo-intersection-numbers} and a basis of log differentials \ref{item:algo-log-differentials} can be done by hand.
In general, Magma's \texttt{RegularModel} functionality or, in the case of hyperelliptic curves, the cluster picture \cite{m2d2, usersguide} can be used to describe the special fibres  \ref{item:algo-intersection-numbers}. %
For hyperelliptic curves and the superelliptic curves considered in Section~\ref{sec:examples}, we are able to compute $\fp$-adic integrals of log differentials \ref{item:algo-integrals} by reducing them to integrals of the particular differential $x^g \, \rd x/y$ on various hyperelliptic curves of genus~$g$, for which we can use existing code \cite{BBK10, BB12}.
There are methods in Magma \cite{magma} and SageMath \cite{sage} for computing regular models, Mordell--Weil bases, unit groups and generators of primes ideals, as required in steps \ref{item:algo-D-transversal-model}, \ref{item:algo-intersection-numbers}, \ref{item:algo-mw-basis}, \ref{item:algo-units}, \ref{item:algo-prime-ideal-generators}.

\begin{rem}
	Other approaches to computing ($S$-)integral points on affine curves include Baker's method \cite{baker}. %
	While often effective, the appearing height bounds can be enormous and the calculations involve base changing to number fields of large degree. %
	In practice, one often combines different techniques, for example also using descent or the Mordell--Weil sieve.
	Our method's main advantage is its applicability to general affine curves without relying on equations of a special shape.
	In addition, it leads to good bounds on the size of $\cY(\bZ_S)$ as explained in \cite[Theorem C]{affchab1}, and is geometric in nature, relying on arithmetic intersection theory.
	Its limitations come from implementations of steps \ref{item:algo-D-transversal-model} to \ref{item:algo-integrals}.
	For example, future extensions of Coleman integration techniques \ref{item:algo-integrals} beyond hyperelliptic curves will immediately extend the scope of our method.
	
	To illustrate what we can already achieve, we determine in \Cref{thm:example-split} the integral points on an even degree hyperelliptic curve whose rational points are, to the best of our knowledge, not provably known.
	In \Cref{exa:superell} we moreover compute a finite set of $p$-adic points containing the $S$-integral points, with $\# S=1$, for an affine curve of the form $y^3= f(x)$.
\end{rem}

\subsection{Structure of the paper}

\Cref{sec:affchab1} reviews the Affine Chabauty method developed in \cite{affchab1}.
We define intersection numbers, the $D$-intersection map, reduction types, and the Selmer set.
In \Cref{sec:integration} we prove the residue theorem for $p$-adic integrals of logarithmic differentials.
In \Cref{sec:explicit} we define the matrix $M(\Sigma^{\csp})$ and prove the main result.
We illustrate our method with several examples in \Cref{sec:examples}.

\subsection*{Acknowledgements}

We thank the anonymous referees for their helpful comments.
The first author is supported by a Walter Benjamin Scholarship from the Deutsche Forschungsgemeinschaft (DFG, German Research Foundation), project number LE 5634/1, and also acknowledges support from the DFG through TRR 326 Geometry and Arithmetic of Uniformized Structures, project number 444845124. The second author is supported by a Minerva Fellowship of the Minerva Stiftung Gesellschaft für die Forschung mbH and also acknowledges support through a guest postdoc fellowship at the Max Planck Institute for Mathematics in Bonn and through an NWO Grant, project number VI.Vidi.192.106.

\section{The Affine Chabauty method}
\label{sec:affchab1}

\subsection{Strategy}

Here we recall the necessary background from \cite{affchab1}.
As before, let $X$ be a smooth proper curve over a number field $K$, let $D \neq \emptyset$ be a finite set of closed points and set $Y = X \smallsetminus D$.
Let $J_Y$ denote the generalised Jacobian of $Y$ and $T_D$ be its toric part, see \cite[Section 2.1]{affchab1}.
We assume that $n\geq 2$; 
otherwise $D$ consists of a single $K$-rational point, the generalised Jacobian~$J_Y$ coincides with the Jacobian~$J$ and one can use the classical Chabauty method for finding not just $S$-integral points on $\cY$ but all rational points on~$X$.
Let $S$ be a finite set of primes of $K$ and let $\cX$ be a regular model of~$X$ over $\cO_K$ that is $D$-transversal over primes in~$S$, see \Cref{sec:redtypes}.
Let~$\cD$ be the closure of~$D$ in~$\cX$ and set $\cY = \cX \smallsetminus \cD$.
Fix a base point $P_0 \in Y(K)$ and an auxiliary prime~$\fp \not\in S$.
Our aim is to determine the set $\cY(\cO_{K,S})$ of $S$-integral points on $\cY$.
We partition the $S$-integral points $\cY(\cO_{K,S}) = \coprod_{\Sigma} \cY(\cO_{K,S})_{\Sigma}$, where $\Sigma$ runs through the finitely many $S$-integral reduction types of $\cY$, see \Cref{sec:redtypes}.
By \cite[Proposition 3.13]{affchab1}, the Abel--Jacobi image $\AJ_{P_0}(\cY(\cO_{K,S})_\Sigma)$ inside $J_Y(K)$ lies in the Selmer subset $\Sel(P_0,\Sigma)$, which is empty or a translate of a finitely generated subgroup of $J_Y(K)$.
For the computations that follow, it is more convenient to work with (translates of) vector spaces rather than abelian groups, see \Cref{sec:selmerset}.
Let us explain the proof of the main result \cite[Theorem 5.1]{affchab1} in this setting:
we locate $\cY(\cO_{K,S})_{\Sigma}$ inside the $\fp$-adic points $\cY(\cO_{\fp})$ via the following commutative diagram, which is the rational version of \cite[(5.3)]{affchab1}:
\begin{equation}
	\label{eq:chabauty-diagram}
	\adjustbox{scale=.9,center}{%
	\begin{tikzcd}
	&[-15pt] \cY(\cO_{K,S})_{\Sigma} \dar["\AJ_{P_0}"] \rar[hook] & \cY(\cO_{K,S}) \dar["\AJ_{P_0}"] \rar[hook] & \cY(\cO_{\fp}) \dar["\AJ_{P_0}"] \drar["\int_{P_0}"]& \\
	b_0 + W_{\bQ}(\Sigma^{\csp}) \rar[equal] & \Sel_{\bQ}(P_0, \Sigma) \rar[hook] & J_Y(K)_{\bQ} \rar[hook] & J_Y(K_{\fp})_{\bQ} \rar["\log_{J_Y}"] & \rH^0(X_{K_{\fp}}, \Omega^1(D))^\vee %
	\end{tikzcd}
}
\end{equation}

If $\log_{J_Y}$ restricted to $W_{\bQ}(\Sigma^{\csp})$ is not surjective, then we can find a nonzero log differential $\omega = \omega(\Sigma^{\csp})$ annihilating $\log_{J_Y}(W_{\bQ}(\Sigma^{\csp}))$.
Pulled back via $\AJ_{P_0}$, we see that the function 
\[
	\rho\colon \cY(\cO_{\fp}) \to K_{\fp}, \quad P \mapsto \int_{P_0}^P \omega
\]
takes the same value $c=c(P_0,\Sigma,\omega)$ for all $P\in \cY(\cO_{\fp})$ satisfying $\AJ_{P_0}(P) \in \Sel_{\bQ}(P_0,\Sigma)$, so in particular for $P\in \cY(\cO_{K,S})_{\Sigma}$.
Non-surjectivity of $\log_{J_Y}|_{W_{\bQ}(\Sigma^{\csp})}$ is guaranteed if $\dim_{\bQ} W_{\bQ}(\Sigma^{\csp}) < \dim_{K_{\fp}} \rH^0(X_{K_{\fp}}, \Omega^1(D)) = g+n-1$, which is equivalent to
\begin{equation}
\label{eq:chabauty-condition}
r + \# C(\Sigma) + ([K : \bQ]-1)n < g + \rank \cO_K^\times + \#|D| + n_2(D) - 1
\end{equation}
and implied by \eqref{eq:chabauty-condition-over-number-field-intro} as $C(\Sigma)\subseteq S$.
In \Cref{sec:explicit} we explain how the differential $\omega$ and the constant $c$ can be computed in practice.
In applications, we will also explain how the $\fp$-adic points $P$ that satisfy $\rho(P)=c$ can be computed with finite $\fp$-adic precision.

\subsection{Intersection numbers}
\label{sec:intersection}

Here we use the intersection number as defined in \cite[(2.3)]{affchab1}.
Recall from \cite[§III.2]{lang:arakelov} the definition of intersection numbers on the arithmetic surface~$\cX$. If $E,F$ are effective divisors on~$\cX$ without common component and $x$ is a closed point of~$\cX$, the \emph{intersection number} of $E$ and $F$ at~$x$ is defined as
\[ i_x(E,F) \coloneqq \length_{\cO_{\cX,x}}(\cO_{\cX,x}/(f,g)), \]
where~$f$ and~$g$ are defining equations for $E$ and $F$ in a neighbourhood of~$x$.
The \emph{intersection cycle} of such divisors is defined as the zero-cycle
\[ E . F \coloneqq \sum_{x} i_x(E,F) [x] \in Z_0(\cX). \]
For a prime $\fq$ of $K$, we also have the $\fq$-intersection number
\begin{equation}
\label{eq:m-intersection-number}
i_{\fq}(E,F) \coloneqq \sum_{x|\fq} i_x(E,F) [k(x) : k],
\end{equation}
where the sum is over all closed points of~$\cX$ above $\fq$. 
We use a generalisation of intersection numbers that allows us to ``intersect'' divisors on $\cX$ with $\tilde{\cD}$, the normalisation $\pi\colon \tilde\cD \to \Spec \cO_K$ of $\cO_K$ in $\cO(D)$.
Let $\nu\colon \tilde \cD \to \cX$. 
For any closed point $x \in \tilde \cD$ and effective divisor~$E$ on~$\cX$ without common component with~$\cD$, we write
\begin{equation}
\label{eq:intersection-Dtilde}
i_x(E,\tilde \cD) \coloneqq \ord_x(\nu^* f)
\end{equation}
with $f$ a defining equation for~$E$ in a neighbourhood of $\nu(x)$.
We also have the intersection cycle
\[ E.\tilde \cD \coloneqq \sum_{x\in|\tilde\cD|} i_x(E,\tilde \cD) [x] \in Z_0(\tilde \cD). \]

We use these intersection numbers to define the \emph{$D$-intersection map} $\sigma$ as in \cite[\S 3.1]{affchab1}.
For any degree-0 divisor~$F$ on~$Y$ with horizontal extension~$\cF$ on~$\cX$ there exists a vertical $\bQ$-divisor $\Phi(F)$ on~$\cX$ such that for all primes~$\fq$ of~$\cO_K$, the divisor $\Psi(F) \coloneqq \cF + \Phi(F)$ has $\fq$-intersection number zero with all vertical divisors over~$\fq$.
We write $\Phi(F)=\sum_{\fq} \Phi_{\fq}(F)$ with $\Phi_{\fq}(F)$ a vertical divisor above $\fq$.
Define
\begin{equation}
\label{eq:global-sigma}
	\sigma\colon J_Y(K) \to V_D \coloneqq Z_0(\tilde \cD)/\pi^* Z_0(\Spec(\cO_K)) \otimes_{\bZ} \bQ, \quad F \mapsto \Psi(F).\tilde \cD. %
\end{equation}
This is well-defined and naturally decomposes as a direct sum of local $D$-intersection maps $\sigma_{\fq}$.
See also \cite[\S 2.4]{affchab1} on how $\Phi(F)$ can be computed.

\subsection{Reduction types}
\label{sec:redtypes}

In the definition of reduction types it is important that we assumed our regular model $\cX/\cO_K$ to be \emph{$D$-transversal} over the primes in $S$, see \cite[Definition 2.17]{affchab1}.
That is, for $\fq \in S$ and $x\in (\cX_{\fq}^{\sm}\cap\cD)(\bF_{\fq})$ the intersection number $i_x(\cX_{\fq},\tilde{\cD})$ is equal to $1$.
If this condition is not satisfied, one can use blow-ups to construct a $D$-transversal model dominating the given one, see \cite[Proposition~2.19]{affchab1}.

We recall the definition of reduction types from \cite[Definition 3.8]{affchab1}.
Namely, an \emph{$S$-integral reduction type} $\Sigma = (\Sigma_{\fq})_{\fq}$ with cuspidal support $S_0 \coloneqq C(\Sigma) \subseteq S$ consists of
\begin{itemize}
	\item for each $\fq\not\in S_0$ an irreducible component $\Sigma_{\fq}$ of $\cX_{\fq}^{\sm} \smallsetminus \cD_{\fq}$ containing at least one $\bF_{\fq}$-point;
	\item for each $\fq \in S_0$ an $\bF_{\fq}$-point $\Sigma_{\fq}$ on the special fibre of~$\cD$ at which~$\cX_{\fq}$ is smooth.
\end{itemize}
The tuple $\Sigma^{\csp}\coloneqq (\Sigma_{\fq})_{\fq\in S_0}$ is called the \emph{cuspidal part} of $\Sigma$.
The subset $\cY(\cO_{K,S})_{\Sigma} \subseteq \cY(\cO_{K,S})$ consists of those points reducing onto $\Sigma_{\fq}$ modulo $\fq$ for all $\fq$.

\subsection{Selmer sets}
\label{sec:selmerset}

Let $\Sigma$ be an $S$-integral reduction type with cuspidal part $\Sigma^{\csp}\coloneqq (\Sigma_{\fq})_{\fq\in S_0}$.
Define the subgroup $U(\Sigma^{\csp}) = \langle [\Sigma_{\fq}]~|~\fq \in S_0\rangle$ of $V_D$ of rank $\# S_0$ and the element $b = b(P_0,\Sigma) = \sum_{Q,\lambda} b_\lambda [\lambda] \in V_D$ by
\begin{equation}
\label{eq:b-lambda}
b_{\lambda} = -i_{\lambda}(\cP_0,\tilde\cQ) + i_{\lambda}(\Phi_{\fq}(\cpt(\Sigma_{\fq}) - \cpt_{\fq}(P_0)), \tilde\cQ),
\end{equation}
where $Q\in |D|$ with closure $\cQ$ in $\cX$, $\lambda \in \tilde \cQ$ is a prime ideal of $\cO_{k(Q)}$, $\cP_0$ is the closure of $P_0$ in $\cX$, $\cpt(\Sigma_{\fq})$ denotes the component of the mod-$\fq$ fibre~$\cX_{\fq}$ containing $\Sigma_{\fq}$, and $\cpt_{\fq}(P_0)$ denotes the component of the mod-$\fq$ fibre onto which the base point~$P_0$ reduces.
Let $\fS(P_0,\Sigma) \coloneqq b + U(\Sigma^{\csp})$, see \cite[Definitions 3.9, 2.14 and 2.30]{affchab1}.
By \cite[Definition 3.12]{affchab1}, the \emph{Selmer set} $\Sel(P_0,\Sigma)\subseteq J_Y(K)$ is defined as the preimage of $\fS(P_0,\Sigma)$ under the $D$-intersection map $\sigma$.
The Selmer subset $\Sel(P_0,\Sigma)$ is empty or a translate of a finitely generated subgroup of rank $n_1(D) + n_2(D) - \#|D| - \rank \cO_K^\times + r + \# S_0$ by \cite[Proposition 3.14]{affchab1}.

It will be more convenient to work with vector spaces.
Let $J_Y(K)_{\bQ} \coloneqq J_Y(K)\otimes_{\bZ}\bQ$.
We denote the unique extension of $\sigma$ to $J_Y(K)_\bQ \to V_D$ also by $\sigma$.
Let $U_{\bQ}(\Sigma^{\csp}) \coloneqq U(\Sigma^{\csp}) \otimes_{\bZ} \bQ$ and $\fS_{\bQ}(P_0,\Sigma)\coloneqq b + U_{\bQ}(\Sigma^{\csp})$.
Let $\Sel_{\bQ}(P_0,\Sigma) \coloneqq \sigma^{-1}(\fS_{\bQ}(P_0,\Sigma))$ be the \emph{rational Selmer set}.
As the restriction $\sigma\colon T_D(K)_{\bQ} \to V_D$ is surjective \cite[Lemma 3.4]{affchab1}, so is $\sigma\colon J_Y(K)_{\bQ} \to V_D$.
Choose any $b_0\in J_Y(K)_{\bQ}$ with $\sigma(b_0)=b$.
Then $\Sel_{\bQ}(P_0,\Sigma) = b_0 + \sigma^{-1}(U_{\bQ}(\Sigma^{\csp}))$ is an affine space of dimension $n_1(D) + n_2(D) - \#|D| - \rank \cO_K^\times + r + \# S_0$ whose direction $W_{\bQ}(\Sigma^{\csp}) \coloneqq \sigma^{-1}(U_{\bQ}(\Sigma^{\csp}))$ only depends on $\Sigma^{\csp}$.

\section{A residue theorem for $p$-adic integrals}
\label{sec:integration}

Let $K$ be a complete subfield of $\bC_p$ (such as~$\bQ_p$ or a finite extension thereof), let $X$ be a smooth proper curve of genus~$g$ over~$K$, let $D \neq \emptyset$ be a finite set of closed points of~$X$ and set $Y = X \smallsetminus D$. Denote by $n \coloneqq D(\Kbar)$ the number of geometric cusps. A \emph{logarithmic differential} on $(X,D)$ is a meromorphic differential on~$X$ which is everywhere regular except for possibly simple poles at the points of~$D$. The $K$-vector space of logarithmic differentials on~$(X,D)$ is $\rH^0(X, \Omega^1(D))$.

We are using $p$-adic integrals of log differentials as defined by Zarhin \cite{zarhin}; see \cite[\S 4]{affchab1} for an overview.
We choose the Iwasawa branch of the $p$-adic logarithm $\log\colon K^\times \to K$ given by $\log(p) = 0$.
With this choice, we extend the logarithm of the $p$-adic Lie group $J_Y(K)$, which a priori is only defined on an open subgroup, to all of $J_Y(K)$.
Assume that $Y(K) \neq \emptyset$.
Then the logarithm of~$J_Y$ can be viewed as a map
\begin{equation}
	\label{eq:log-JY}
	\log_{J_Y}\colon J_Y(K) \to \rH^0(X, \Omega^1(D))^\vee.
\end{equation}
This allows us to define, for any degree zero divisor~$F$ on $\Div^0(Y)$, representing an element of~$J_Y(K)$, and for any log differential~$\omega$ on $(X,D)$ the integral
\[ \int_F \omega \coloneqq \log_{J_Y}(F)(\omega) \in K. \]

Tiny integrals, that is $\int_P^Q \omega \coloneqq \int_{[Q]-[P]} \omega$ for $P,Q\in Y(K)$ in the same residue disc, can be computed by expanding $\omega$ as a power series in a uniformiser of the disc and formally integrating and evaluating this power series at the end points.

When $\omega$ is a holomorphic differential on~$X$, the integral of~$\omega$ over any principal divisor $\div(f)$ vanishes since the integral is pulled back from the Jacobian~$J$ where principal divisors are trivial. When $\omega$ has poles, this is no longer true. In this section we show a formula for $\int_{\div(f)} \omega$ involving a sum over the poles of~$\omega$ and the residues of~$\omega$ at those poles, reminiscent of the residue theorem in complex analysis. The formula is used in \Cref{sec:explicit} to obtain an explicit method for constructing Chabauty functions.

\begin{defn}
	\label{def:residue}
	Let $\omega$ be a logarithmic differential on $(X,D)$ and let $Q$ be a closed point in~$D$. Let $t \in k(X)^\times$ be a uniformiser at~$Q$. Then we can write $\omega = f \frac{\rd t}{t}$ with $f \in k(X)^\times$ regular at~$Q$. The \emph{residue} of~$\omega$ at~$Q$ is defined by
	\[ \Res_Q(\omega) \coloneqq f(Q) \in k(Q). \]
	This definition does not depend on the choice of uniformiser.
\end{defn}

\begin{rem}
	Different definitions of the residue at a closed point exist in the literature. The definition above is a special case of the one given in \cite[\href{https://stacks.math.columbia.edu/tag/0FMU}{Tag 0FMU}]{stacks-project}. In \cite[§II.7]{serrealggp}, residues of differentials on curves are defined over algebraically closed fields via Laurent series expansions. It specialises to our definition upon base changing from~$K$ to $\Kbar$ and choosing $K$-embeddings $k(Q) \hookrightarrow \Kbar$. In \cite{tate:residues}, a different definition of the residue at a closed point is given using traces of linear operators on the function field of~$X$. Tate's residues at a closed point~$Q$ are always values of the base field~$K$, whereas we need them to be elements of the residue field~$k(Q)$. Tate's residue is related to ours via the trace map~$\Tr_{k(Q)/K} \colon k(Q) \to K$. 
\end{rem}

We first state the residue theorem in the case where all cusps are $K$-rational, the general version is given in \Cref{thm:residue-theorem-general} below. 

\begin{thm}[$p$-adic residue theorem, case of rational cusps]
	\label{thm:residue-theorem}
	Let $f \in k(X)^\times$ be a rational function whose divisor has support in~$Y$ and let~$\omega$ be a log differential on~$(X,D)$. Assume that all points in~$D$ are $K$-rational. Then we have
	\begin{equation}
		\label{eq:residue-theorem}
		\int_{\div(f)} \omega = \sum_{Q \in D(K)} \Res_Q(\omega) \log f(Q).
	\end{equation}
\end{thm}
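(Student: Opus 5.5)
Both sides of \eqref{eq:residue-theorem} are $K$-linear in $\omega$, so I will split $\rH^0(X,\Omega^1(D))$ into the holomorphic differentials and a complement spanned by differentials with prescribed residues. For holomorphic $\omega$ the right-hand side vanishes. The left-hand side vanishes as well, because $\log_{J_Y}$ restricted to holomorphic forms factors through $J_Y(K)\to J(K)$, where $\div(f)$ is trivial. So everything reduces to the toric part. Concretely, I will use the exact sequence $0\to T_D\to J_Y\to J\to 0$ with $T_D=\big(\prod_{Q\in D}\bG_m\big)/\bG_m$ (all cusps rational), together with the dual sequence on cotangent spaces $0\to \rH^0(X,\Omega^1)\to \rH^0(X,\Omega^1(D))\to \Lie(T_D)^\vee\to 0$. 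Here the second map is $\omega\mapsto(\Res_Q\omega)_Q$, landing in the sum-zero hyperplane, which equals the cotangent space of $T_D$ by the classical residue theorem.

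The key geometric input is that a principal divisor $\div(f)$ with support in $Y$ maps under $\Div^0(Y)\to J_Y(K)$ into $T_D(K)$, namely to the class of $(f(Q))_{Q\in D}$ modulo the diagonal $K^\times$. This is the standard description of the generalised Jacobian as $\Div^0(Y)$ modulo divisors of functions $\equiv 1$ mod $D$ (Rosenlicht–Serre). Indeed, $\div(f)$ is trivial in $J_Y$ iff $f$ can be rescaled to be $\equiv1$ at all cusps. I would cite \cite[Section 2.1]{affchab1} or \cite{serrealggp} for the precise sign and normalisation of this identification; checking that it matches the convention used to define $T_D$ is a point to verify carefully.

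Then I use functoriality of the logarithm of $p$-adic Lie groups for the closed immersion $T_D\hookrightarrow J_Y$. This gives $\log_{J_Y}(\div f)(\omega)=\log_{T_D}((f(Q))_Q)(\omega|_{\Lie T_D})$. On the torus $\bG_m^D/\bG_m$, with the Iwasawa branch, the logarithm is coordinatewise $\log$. The restriction of $\omega$ to $\Lie(T_D)$ is the functional $(x_Q)\mapsto\sum_Q\Res_Q(\omega)x_Q$, which is well defined on the quotient by the diagonal because the residues sum to zero. Combining these gives $\sum_Q \Res_Q(\omega)\log f(Q)$. The diagonal ambiguity, scaling $f$ by a constant, disappears for the same reason.

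The main obstacle is identifying the restriction map $\rH^0(X,\Omega^1(D))\to\Lie(T_D)^\vee$ with residues, including the constant (no stray sign or factor), relative to the chosen coordinates on $T_D$ and the map $\Div^0(Y)\to J_Y$. I would verify this with a direct computation: take $X=\bP^1$, $D=\{0,\infty\}$, so $J_Y=\bG_m$. Then $\omega=\rd t/t$ has $\Res_0=1$ and $\Res_\infty=-1$, and the Zarhin integral $\int_a^b\rd t/t=\log b-\log a$. Comparing with $f=(t-b)/(t-a)$, whose values are $f(0)=b/a$ and $f(\infty)=1$, fixes the normalisation. The general case then follows by naturality, or alternatively by the uniqueness characterisation of Zarhin's integral from \cite[\S 4]{affchab1}.
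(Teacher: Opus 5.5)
\textbf{There is a genuine gap.} Your reduction steps are fine: linearity in $\omega$, vanishing on holomorphic forms, $\div(f)\mapsto (f(Q))_Q\in T_D(K)$, functoriality of $\log$ along $T_D\hookrightarrow J_Y$, and the coordinatewise $\log$ on the torus. Together they are exactly the alternative route sketched in Remark~\ref{rem:residue-theorem-direct-proof}, not the paper's proof.

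However, the whole content of the theorem sits in the claim you flag as the main obstacle. That claim is that restriction $\rH^0(X,\Omega^1(D))\to\Lie(T_D)^\vee$ is $\omega\mapsto(\Res_Q\omega)_Q$ in the coordinates $\rd z_Q/z_Q$. The paper itself omits this identification in that remark, and your proposed way of settling it does not work.

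The exact sequences only show that restriction kills $\rH^0(X,\Omega^1)$. Hence it induces \emph{some} linear isomorphism $L$ from the residue hyperplane $\{(r_Q):\sum_Q r_Q=0\}$ onto $\{(c_Q):\sum_Q c_Q=0\}$. Nothing you say shows that $L$ is diagonal, let alone a scalar independent of $(X,D)$. So the computation for $X=\bP^1$, $D=\{0,\infty\}$ proves the theorem only in that single case and cannot ``fix the normalisation'' for other curves.

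``Naturality'' would require a specific morphism relating $(X,D)$ to $(\bP^1,\{0,\infty\})$, plus a compatibility statement for the integrals along it, and you give neither. The uniqueness characterisation of Zarhin's integral determines $\int\omega$ once $\omega$ is viewed as an invariant differential on $J_Y$. It says nothing about what that differential restricts to on $T_D$.

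\textbf{How the paper avoids this.} The paper's proof sidesteps the identification entirely. After reducing by linearity to integer residues, it applies Colmez's reciprocity law for differentials of the third kind \cite{colmez} to $\alpha_1=\omega$ and $\alpha_2=\rd f/f$. The cup-product correction term vanishes because $\alpha_2$ is a logarithmic derivative. Then $\Res(\rd f/f)=\div(f)$ and $\int_{\sum m_Q[Q]}\rd f/f=\sum_Q m_Q\log f(Q)$ turn the reciprocity law into \eqref{eq:residue-theorem}.

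\textbf{How to repair your route.} One way to close the gap is a trace argument.
\begin{enumerate}
\item For a cusp $Q$, choose $h\in k(X)^\times$ with $h(Q)=0$ and $h(Q')=\infty$ for all other cusps $Q'$.
\item Then $a\mapsto h^*([a]-[1])=\div\bigl(\tfrac{h-a}{h-1}\bigr)$ is a homomorphism $\bG_m\to J_Y$. By your description of principal divisors, it is the inclusion of the $Q$-th factor of $T_D$.
\item The real input you must prove is the compatibility $\int_{h^*E}\omega=\int_E\Tr_h\omega$ for degree-zero divisors $E$ on $\bG_m$.
\item $\Tr_h\omega$ is regular on $\bG_m$, has at most simple poles at $0$ and $\infty$, and satisfies $\Res_0(\Tr_h\omega)=\sum_{P\in h^{-1}(0)}\Res_P(\omega)=\Res_Q(\omega)$. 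Hence it equals $\Res_Q(\omega)\,\rd t/t$, and your $\bP^1$ computation finishes the argument.
\end{enumerate}
The same trace compatibility applied directly to $f$ and $E=[0]-[\infty]$, using $\Tr_f\omega=\sum_Q\Res_Q(\omega)\,\rd t/(t-f(Q))$, proves the theorem without passing through $T_D$ at all.
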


\begin{proof}
	As explained in \cite[Remarque~II.2.10]{colmez}, this formula can be derived from the reciprocity law for differentials of the third kind \cite[Théorème~6]{colmez} applied to $\alpha_1 = \omega$ and $\alpha_2 = \frac{\rd f}{f}$. We sketch the argument here. By linearity, we can assume that $\omega$ has integer residues, so that it is a differential of the third kind in the sense of \cite{colmez}. The reciprocity law says that the difference of the two integrals in the equation
	\begin{equation}
		\label{eq:reciprocity-law}
		\int_{\Res\left(\frac{\rd f}{f}\right)} \omega = \int_{\Res(\omega)} \frac{\rd f}{f}
	\end{equation}
	is given by a certain cup product which vanishes when one of the differentials is of the form~$\frac{\rd f}{f}$. Here, $\Res(\omega) = \sum_Q \Res_Q(\omega) [Q]$ denotes the residue divisor of a differential form. Using the fact that $\Res(\rd f/f) = \div(f)$ and that the integral of $\frac{\rd f}{f}$ on a divisor~$\sum_Q m_Q[Q]$ is given by $\sum_Q m_Q \log f(Q)$, Equation~\eqref{eq:reciprocity-law} becomes~\eqref{eq:residue-theorem}. We remark that Colmez works under the assumption that the genus of~$X$ is at least~one but this assumption does not appear to be used. 
\end{proof}

\begin{rem}
	\label{rem:residue-theorem-direct-proof}
	\Cref{thm:residue-theorem} can also be proved directly from the definition of the abelian integral, without reference to the reciprocity law for differentials of the third kind. This proof boils down to showing that the bottom map in the commutative diagram
	\[ \begin{tikzcd}
			T_D(K) \dar["\log_{T_D}"] \rar & J_Y(K) \dar["\log_{J_Y}"] \\
			\Lie(T_D) \rar["\phi"] & \Lie(J_Y)
	\end{tikzcd} \]
	can be identified with
	\begin{align}
		\label{eq:residue-theorem-phi-map}
		\phi\colon \Bigl(\prod_{Q \in D(K)} K\Bigr)\bigm/K \to \rH^0(X, \Omega^1(D))^\vee, \quad
		\phi((b_Q)_Q)(\omega) \coloneqq \sum_Q \Res_Q(\omega) b_Q,
	\end{align}
	where $T_D = \bigl(\prod_{Q \in D(K)} \bG_m\bigr)/\bG_m$ denotes the toric part of the generalised Jacobian. Indeed, the principal divisor $\div(f) \in J_Y(K)$ is the image of the tuple $(f(Q))_{Q \in D(K)} \in T_D(K)$ along the top row of the diagram, and spelling out the commutativity yields precisely~\eqref{eq:residue-theorem}. We omit the details of how to identify the map~$\phi$ in the diagram with~\eqref{eq:residue-theorem-phi-map}.
\end{rem}

We can remove the restriction that the cusps be $K$-rational at the expense of adding the trace maps $\Tr_{k(Q)/K}\colon k(Q) \to K$ of the residue field extensions at the cusps in the formula:

\begin{thm}[$p$-adic residue theorem, general version]
	\label{thm:residue-theorem-general}
	Let $f \in k(X)^\times$ be a rational function whose divisor has support in~$Y$ and let~$\omega$ be a log differential on~$(X,D)$. Then we have
	\begin{equation}
		\label{eq:residue-theorem-general}
		\int_{\div(f)} \omega = \sum_{Q \in |D|} \Tr_{k(Q)/K}\bigl(\Res_Q(\omega) \log f(Q)\bigr).
	\end{equation}
\end{thm}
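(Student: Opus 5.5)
The plan is to reduce \Cref{thm:residue-theorem-general} to the rational case \Cref{thm:residue-theorem} by base change to a finite Galois extension $L/K$ inside $\bC_p$ over which all cusps become rational. Take $L$ to be the Galois closure of the compositum of the residue fields $k(Q)$, $Q \in |D|$. Put $X_L = X \times_K L$ and $D_L = D \times_K L$. Then every point of $D_L$ is $L$-rational, and the geometric points over $Q$ correspond to the $K$-embeddings $\tau\colon k(Q) \hookrightarrow L$.

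The first step is to check compatibility of the integral with base change. The abelian integral $\int_F \omega = \log_{J_Y}(F)(\omega)$ should be functorial in the base field: the logarithm of the $p$-adic Lie group $J_{Y_L}(L)$ restricts to that of $J_Y(K)$, since $J_{Y_L} = J_Y \times_K L$. The Iwasawa branch is compatible too, since $\log(p)=0$ determines the branch on all of $\bC_p^\times$. Hence for $F = \div(f)$ with $f \in k(X)^\times$ and $\omega \in \rH^0(X,\Omega^1(D))$, the integral over $K$ equals the integral over $L$ of $\div(f_L)$ against $\omega_L$. Zarhin's integral is characterised via the Lie group logarithm, so this should follow from functoriality of Lie group logarithms under extension of scalars. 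I expect this step to be the main technical point to justify carefully, though not a serious obstacle.

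Next, apply \Cref{thm:residue-theorem} over $L$:
\[ \int_{\div(f)} \omega = \sum_{Q \in |D|} \sum_{\tau\colon k(Q)\hookrightarrow L} \Res_{Q_\tau}(\omega_L) \log f(Q_\tau). \]
It remains to relate the residues and values at $Q_\tau$ to those at $Q$. A uniformiser $t$ at $Q$ on $X$ remains a uniformiser at each $Q_\tau$, because $D$ is reduced and $k(Q)/K$ is separable, so $X_L \to X$ is étale over $Q$. Writing $\omega = h\,\rd t/t$, we get $\Res_{Q_\tau}(\omega_L) = h(Q_\tau) = \tau(\Res_Q\omega)$. Likewise $f(Q_\tau) = \tau(f(Q))$, and $\log\circ\tau = \tau\circ\log$ by continuity of $\tau$ and Galois-equivariance of the Iwasawa logarithm. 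Therefore the inner sum equals $\sum_\tau \tau\bigl(\Res_Q(\omega)\log f(Q)\bigr) = \Tr_{k(Q)/K}\bigl(\Res_Q(\omega)\log f(Q)\bigr)$, which is the claimed formula.

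The step requiring most care is the base-change compatibility of $\int$. A secondary check is that $\tau$ extends continuously to the completion: here $k(Q)$ is a finite extension of the complete field $K$, hence already complete, so $\tau$ is automatically continuous.
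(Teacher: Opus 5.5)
Your proposal is correct and follows essentially the same route as the paper. Both arguments base change to a finite extension $L$ over which all cusps become rational, use that a uniformiser at $Q$ remains a uniformiser at each point above $Q$ (so residues and values at cusps are transported by the embeddings $\tau$), invoke invariance of the integral under extension of the base field, and then apply the rational-cusp case. You are more explicit than the paper about why the sum over $\tau$ yields $\Tr_{k(Q)/K}$ and why $\log$ commutes with $\tau$, which the paper compresses into the statement $\Res_Q(\omega) = \Res_{Q'}(\omega)$ in $L$.
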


\begin{proof}
	Let $L$ be a finite extension of~$K$ in~$\bC_p$ such that all points in~$D$ are defined over~$L$. Then the right hand side of~\eqref{eq:residue-theorem} can be written as $\sum_{Q'} \Res_{Q'}(\omega) \log f(Q')$, where $Q'$ runs through $D(L)$ and the residues are now taken on the base changed curve~$X_L$. Here we use the fact that a uniformiser at~$Q$ on~$X$ remains a uniformiser at each point~$Q'$ over~$Q$ on~$X_L$, so that $\Res_Q(\omega) = \Res_{Q'}(\omega)$ holds in~$L$. The integral on the left hand side of \eqref{eq:residue-theorem} is also invariant under extending the base field, so we may apply \Cref{thm:residue-theorem}. %
\end{proof}

\section{Annihilating log differentials}
\label{sec:explicit}

As in \Cref{sec:affchab1}, let $X$ be a smooth proper curve over a number field ~$K$, let $D \neq \emptyset$ be a finite set of closed points and set $Y = X \smallsetminus D$.
Assume $D$ does not consist of a single $K$-rational point.
Let $\cX$, $\cD$ and $\cY$ be as before.
Fix a base point $P_0 \in Y(K)$, an $s$-dimensional linear subspace $U$ of the $\bQ$-vector space $V_D$ defined in \eqref{eq:global-sigma} and an element $b \in V_D$. %
Moreover, fix an auxiliary prime~$\fp \not\in S$.
Let $W \coloneqq \sigma^{-1}(U)$, which is a linear subspace of $J_Y(K)_{\bQ}$, and let $b_0\in J_Y(K)_{\bQ}$ with $\sigma(b_0) = b$. %
If
\begin{equation}
\label{eq:chabauty-condition-affsubspace}
r + s + ([K : \bQ]-1)n < g + \rank \cO_K^\times + \#|D| + n_2(D) - 1,
\end{equation}
then the image of $W$ under $\log_{J_Y}$ is contained in a proper $K_{\fp}$-linear subspace of $\rH^0(X_{K_{\fp}},\Omega^1(D))^\vee$.
Hence there is a non-zero log differential $\omega=\omega(U)$ such that every point $P\in Y(K)$ with $\AJ_{P_0}(P)\in W$ satisfies %
\[
	\int_{P_0}^P \omega = 0.
\]
Moreover, there is a constant $c=c(P_0,U,b)\in K_{\fp}$ such that every point $P\in Y(K)$ with $\AJ_{P_0}(P)\in b_0 + W$ satisfies
$
	\int_{P_0}^P \omega = c.
$
In this section, we explain how $\omega$ and $c$ can be computed in practice.
In later applications, we will partition the $S$-integral points according to their reduction types $\Sigma=(\Sigma_{\fq})_{\fq}$ and set $b_0 + W$ equal to the Selmer subset $\Sel_{\bQ}(P_0,\Sigma)$, i.e. take $b + U=\fS_{\bQ}(P_0,\Sigma)$.
After some preliminary computations, we determine the differential $\omega$ by solving a system of linear equations. %
As outlined in \Cref{sec:algorithm}, the required inputs and preliminary computations are the following:
\begin{enumerate}[label=(\arabic*)] %
	
	\item \label{item:divisors}
	Divisors $G_1,\ldots,G_r \in \Div^0(Y)$ generating a full rank subgroup of the Mordell--Weil group $J(K)$. Finding these is a difficult computational problem in general, but for curves of genus at most two this can often be achieved by a 2-descent calculation as described in \cite{stoll:two-descent} and implemented in Magma via functions like \texttt{MordellWeilGroup}. Mordell--Weil generators for elliptic curves and genus-2 curves are also listed in the LMFDB \cite{lmfdb}. 
	By adding a principal divisor or taking a multiple if necessary, we can ensure that the $G_i$ have support in $Y$.
	
	\item \label{item:psi}
	For each $G_i$ chosen in~\ref{item:divisors} and each prime~$\fq$ of $K$ a vertical $\bQ$-divisor $\Phi_{\fq}(G_i)$ on~$\cX$ in the fibre over~$\fq$ such that $\Psi_{\fq}(G_i) \coloneqq \cG_i + \Phi_{\fq}(G_i)$ has $\fq$-intersection number zero with every vertical divisor on~$\cX$ over~$\fq$. Here, $\cG_i$ denotes the extension of~$G_i$ to a horizontal divisor on~$\cX$ obtained by taking Zariski closures of points. The $\Phi_{\fq}(G_i)$ can be computed in terms of the intersection matrix of the mod-$\fq$ fibre of~$\cX$, see \cite[Lemma~2.7]{affchab1}.
	When $\fq$ is a prime of good reduction for~$\cX$ we take $\Phi_{\fq}(G_i) = 0$ for all~$i$, which leaves only finitely many $\Phi_{\fq}(G_i)$ to be determined. 
	
	\item \label{item:differentials}
	A basis $\omega_1,\ldots,\omega_{g+n-1}$ of the space of log differentials on $(X,D)$.
	We also need the residues $\Res_Q(\omega_j)$ at the cusps $Q \in |D|$, which can be computed using \Cref{def:residue}.
	We will always assume that $\omega_1,\ldots,\omega_g$ form a basis of the space of regular differentials; this assumption simplifies the calculations as then $\Res_Q(\omega_j) = 0$ for all $Q\in|D|$ and $1\leq j\leq g$.
	
	\item \label{item:units}
	Elements $e_i = (e_{i,Q})_Q \in \prod_{Q\in|D|} \cO_{k(Q)}^\times$ for $1\leq i \leq k \coloneqq n_1(D)+n_2(D)-\# D - \rank \cO_K^\times$ generating a full rank subgroup of $\left(\prod_{Q\in|D|} \cO_{k(Q)}^\times\right)/\cO_K^\times$.

	\item \label{item:affspace}
	A basis $u_1,\dots,u_s$ of the subspace $U$ of $V_D$. The $u_i = \sum_{Q\in |D|} \sum_{\lambda} u_{i,\lambda} [\lambda]$ as well as the fixed vector $b = \sum_{Q\in |D|} \sum_{\lambda} b_\lambda [\lambda]$ are given as elements of $Z_0(\tilde{\cD})_\bQ$, with $u_{i,\lambda},b_\lambda \in \bQ$.
	Here $\lambda$ runs through the primes of $k(Q)$ and $[\lambda]$ denotes the prime~$\lambda$ viewed as a 0-cycle on $\tilde \cQ \subseteq \tilde \cD$, where $\tilde\cQ$ denotes the normalisation of the closure $\cQ$ of $Q$ in $\cX$.	
	We let $W \coloneqq \sigma^{-1}(U)$, which is a subspace of $J_Y(K)_{\bQ}$.
	
	\item \label{item:piK}
	For each prime $\fq$ of $\cO_K$ a generator $\rho_{\fq}$ of $\fq$ in $K^\times \otimes_{\bZ} \bQ$.
	Here we use that every prime ideal becomes principal after tensoring the group of fractional ideals with~$\bQ$.
	Concretely, we can take $m \geq 1$ such that $\fq^m$ becomes principal in $\cO_{K}$, say $\fq^m = (a)$, and then set $\rho_{\fq} = a^{1/m} \coloneqq a \otimes \frac1{m} \in K^\times \otimes \bQ$.
	
	\item \label{item:pi}
	For each cusp $Q \in |D|$ and each prime~$\lambda$ of $\cO_{k(Q)}$, a generator~$\pi_{\lambda}$ of $\lambda$ in $k(Q)^\times \otimes_{\bZ} \bQ$.
	We moreover choose the $\pi_{\lambda}$ in such a way that for each prime~$\fq$ of $\cO_K$ and each cusp $Q$ we have
	\begin{equation}
		\label{eq:pi-lambda-assumption}
		\prod_{\lambda|\fq} \pi_{\lambda}^{e(\lambda|\fq)} = \rho_{\fq} \quad \text{in } k(Q)^\times \otimes \bQ,
	\end{equation}
	where the product runs over all primes $\lambda$ of $\cO_{k(Q)}$ lying over~$\fq$ and $e(\lambda|\fq)$ denotes the ramification index. The equality~\eqref{eq:pi-lambda-assumption} always holds up to a unit in $\cO_{k(Q)}^\times \otimes \bQ$, and we can make sure that it holds exactly by absorbing the unit into one of the $\pi_{\lambda}$.
	In the formulas that follow, only finitely many of the $\pi_{\lambda}$ matter, namely those where the underlying prime $\fq$ is of bad reduction for~$\cX$, or the horizontal divisors~$\cG_i$ and $\cD$ intersect in the mod-$\fq$ fibre, or $u_{i,\lambda}\neq 0$, or the base point $P_0$ is not integral at~$\fq$. %
\end{enumerate}

Given these data we construct an $(r + n_1(D) + n_2(D) - \#|D| + s) \times (g+n-1)$ matrix
\begin{equation}
	\label{eq:MSigma}
	M(U) \coloneqq \begin{pmatrix}
		A & B  \\
		0 & C  \\
		0 & D(U) 
	\end{pmatrix}
\end{equation}
with $K_{\fp}$-entries by defining the submatrices $A$, $B$, $C$, $D(U)$ as follows:
\begin{enumerate} [label=(\Alph*)]
	\item $A$ is the $r\times g$-matrix with entries
	\begin{equation}
		\label{eq:matrixA}
		A_{i,j} \coloneqq \int_{G_i} \omega_j, \quad 1\leq i\leq r, 1\leq j \leq g
	\end{equation}

	\item $B$ is the $r \times (n-1)$-matrix with entries $B_{i,j} \coloneqq H(G_i,\omega_{g+j})$ for $1\leq i\leq r$, $1\leq j \leq n-1$, 
	where the pairing
	$ H\colon J_Y(K) \times \rH^0(X, \Omega^1(D)) \to K_{\fp} $
	is defined by $H(G, \omega) \coloneqq$
	\begin{equation}
		\label{eq:H}
		 \int_G \omega - \sum_{\substack{Q \in |D|,\\ \varphi\colon k(Q) \to \Kpbar}} \varphi(\Res_Q \omega) \sum_{\substack{\text{$\fq$ prime,}\\ \text{$\lambda|\fq$ in $\cO_{k(Q)}$}}} i_{\lambda}(\Psi_{\fq}(G), \tilde \cQ) \log \varphi(\pi_{\lambda}).
	\end{equation}
	Here, $Q$ runs over all cusps and $\varphi$ runs over all embeddings of the residue field~$k(Q)$ into~$\Kpbar$. In the inner sum, $\fq$ runs over all  primes of $K$, $\lambda$ runs over all primes of $\cO_{k(Q)}$ dividing~$\fq$ and $i_{\lambda}(\Psi_{\fq}(G), \tilde\cQ)$ refers to the intersection number defined in \S\ref{sec:intersection} when $\lambda$ is viewed as a closed point of $\tilde\cQ = \Spec(\cO_{k(Q)})$.
	The logarithm in \eqref{eq:H} is the $p$-adic logarithm on $\Kpbar^\times$, which naturally extends to $\Kpbar^\times \otimes \bQ$ via $\log(x^{1/m}) \coloneqq \frac1{m} \log(x)$. %
	The intersection number is zero except possibly for those primes~$\fq$ where $\cX$ has bad reduction or the horizontal divisors~$\cG$ and~$\cD$ intersect in the fibre over~$\fq$. In particular, the inner sum in \eqref{eq:H} is finite. Note that $H(G, \omega)$ is an element of $K_{\fp} \subseteq \Kpbar$ by Galois invariance. The divisor $\Psi_{\fq}(G)$ is only well-defined up to adding a $\bQ$-multiple of the complete mod-$\fq$ fibre $\cX_{\fq}$ but $H(G,\omega)$ is still well-defined since for fixed~$\fq$ we have
	\begin{align*}
		&\qquad\sum_{\substack{Q \in |D|,\\ \varphi\colon k(Q) \to \Kpbar}} \varphi(\Res_Q \omega) \sum_{\text{$\lambda|\fq$ in $\cO_{k(Q)}$}} i_{\lambda}(\cX_{\fq}, \tilde \cQ) \log \varphi(\pi_{\lambda}) \\
		&\qquad = \sum_{Q,\varphi} \varphi(\Res_Q \omega) \sum_{\lambda|\fq} e(\lambda|\fq) \log \varphi(\pi_{\lambda})
		\overset{\eqref{eq:pi-lambda-assumption}}{=} \log(\rho_{\fq}) \sum_{\overline{Q} \in D(\Kpbar)} \Res_{\overline{Q}}(\omega) = 0
	\end{align*}
	by the residue formula \cite[Proposition~II.6]{serrealggp}. The fact that $H(G,\omega)$ is well-defined on the generalised Jacobian, i.e.\ $H(\div(f), \omega) = 0$ for rational functions~$f \in k(X)^\times$ with $f\vert_D = 1$, follows from the fact that $i_{\lambda}(\div(f), \tilde \cQ) = v_{\lambda}(f(Q))$ by definition of the intersection number, where $v_{\lambda}$ denotes the normalised valuation at $\lambda$. Unless all cusps are rational or imaginary quadratic, the pairing $H(-,-)$ depends on the choice of the prime ideal generators~$\pi_{\lambda}$.
	
	\item $C$ is the $(n_1(D)+n_2(D)-\#|D|-\rank\cO_K^\times) \times (n-1)$-matrix with entries
	\begin{equation}
		\label{eq:matrixC}
		C_{i,j} \coloneqq 
		\sum_{\substack{Q \in |D|,\\ \varphi\colon k(Q) \to \Kpbar}} \varphi(\Res_Q \omega_{g+j}) \log \varphi(e_{i,Q}). %
	\end{equation}
	The matrix has one row for each element~$e_i$ that was chosen in step~\ref{item:units}. If all cusps are rational or imaginary quadratic, the matrix~$C$ is empty.
	
	\item $D(U)$ is the $s \times (n-1)$-matrix with entries
	\begin{equation}
		\label{eq:matrixD}
		D(U)_{i,j} \coloneqq \sum_{\substack{Q \in |D|,\\ \varphi\colon k(Q) \to \Kpbar}} \varphi(\Res_{Q} \omega_{g+j}) \sum_{\substack{\text{$\fq$ prime,}\\ \text{$\lambda|\fq$ in $\cO_{k(Q)}$}}} u_{i,\lambda} \log \varphi(\pi_{\lambda}).
	\end{equation}
	Here $u_{i,\lambda}$ are as in step \ref{item:affspace} above.
	
\end{enumerate}

\begin{rem}
	\label{rem:global-height}
	When $K$ is $\bQ$ or imaginary quadratic and the residues of $\omega_j$ are in $K$, the expressions for the entries of $M(U)$ simplify.
	Namely, the $j$-th column of $C$ is $0$, and the entries in the $j$-th column of $B$ (and the constants $c(b,\omega)$ in \eqref{eq:constants} below) can be expressed in terms of $\fq$-intersection numbers with the residue divisor $R = \sum_{Q \in |D|} \Res_{Q}(\omega_j) [Q]$.
	If moreover $K=\bQ$, then $H(G,\omega_j)$ agrees with $h(G,R)$, the global Coleman--Gross height pairing \cite{CG89}, for a suitable choice of splitting of the Hodge filtration on $\rH^1_{\dR}(X/\bQ_p)$.
\end{rem}

When Condition~\eqref{eq:chabauty-condition-affsubspace} is satisfied, the number of rows of~$M(U)$ is strictly smaller than the number of columns, so the matrix has a non-trivial kernel.

\begin{thm}
	\label{thm:explicit-function}
	If $(a_1,\ldots,a_{g+n-1}) \in K_{\fp}^{g+n-1}$ is contained in the kernel of the matrix $M(U)$ defined in equations~\eqref{eq:MSigma}--\eqref{eq:matrixD}, let $\omega = \omega(U) \coloneqq a_1 \omega_1 + \dots + a_{g+n-1} \omega_{g+n-1}$.
	Define
	\begin{equation}
	\label{eq:constants}
	c = c(b, \omega) \coloneqq \sum_{\substack{Q \in |D|,\\ \varphi\colon k(Q) \to \Kpbar}} \varphi(\Res_Q \omega) \sum_{\textnormal{$\lambda$ prime of $\cO_{k(Q)}$}} b_{\lambda} \log \varphi(\pi_{\lambda}).
	\end{equation}
	Then
	\[
		\int_F \omega = c \quad \text{ for all } F \in b_0 + W.
	\]
\end{thm}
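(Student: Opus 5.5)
The key idea is to show that the functional
\[
 L(F) \coloneqq \int_F \omega - \sum_{Q,\varphi} \varphi(\Res_Q \omega) \sum_{\lambda} \sigma(F)_\lambda \log \varphi(\pi_\lambda)
\]
vanishes on all of $J_Y(K)_{\bQ}$. Here $\sigma(F)_\lambda$ denotes the $\lambda$-coefficient of $\sigma(F)$. Once this is known, the theorem follows by evaluating the correction term on $b_0+W$.

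First, $L$ is well defined on $V_D$-classes. A representative of $\sigma(F)$ is only determined modulo $\pi^*Z_0(\Spec\cO_K)$. Changing it by $\pi^*[\fq]$ changes the sum by $\log(\rho_{\fq})\sum_{\overline Q}\Res_{\overline Q}(\omega)=0$. This uses \eqref{eq:pi-lambda-assumption} and the residue formula, exactly as in the well-definedness argument for $H$. Note also that $L(F)=H(F,\omega)$ by definition of $\sigma$ as $\Psi(F).\tilde\cD=\sum_{\fq}\Psi_{\fq}(F).\tilde\cD$. Since $L$ is $\bQ$-linear and $\omega$ is fixed, it then suffices to show $L=0$ on a spanning set of $J_Y(K)_{\bQ}$.

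For a spanning set I will use the exact sequence $0\to T_D(K)_{\bQ}\to J_Y(K)_{\bQ}\to J(K)_{\bQ}\to 0$. This gives generators $G_1,\dots,G_r$ (lifts of a basis of $J(K)_{\bQ}$) together with $T_D(K)_{\bQ}$. An element of $T_D(K)_{\bQ}$ is represented by $\div(f)$ with $(f(Q))_Q\in(\prod_Q k(Q)^\times/K^\times)\otimes\bQ$. This group is in turn spanned by the unit tuples $e_i$ and by tuples whose components are the $\pi_\lambda$ (modulo constants $\rho_{\fq}$). Two computations are needed.

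\begin{itemize}
\item For $F=\div(f)$, \Cref{thm:residue-theorem-general} gives $\int_F\omega=\sum_{Q,\varphi}\varphi(\Res_Q\omega)\log\varphi(f(Q))$. Meanwhile $\sigma(\div f)=\sum_\lambda v_\lambda(f(Q))[\lambda]$, since $\Phi(\div f)=0$ up to full fibres and $i_\lambda(\div f,\tilde\cQ)=v_\lambda(f(Q))$. Writing $f(Q)=\text{unit}\cdot\prod\pi_\lambda^{v_\lambda}$ in $k(Q)^\times\otimes\bQ$, we get $L(\div f)=\sum\varphi(\Res_Q\omega)\log\varphi(\text{unit part})$. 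Expanding the unit part in the $e_i$ up to $\cO_K^\times$ and torsion turns this into a combination of the entries $\sum_j a_jC_{i,j}$. The $\cO_K^\times$ part contributes zero by the residue formula, and torsion contributes zero since $\log$ kills roots of unity. So $L(\div f)=0$ because $Ca=0$ (the holomorphic $\omega_j$ have zero residues).
\item For $F=G_i$, we have $L(G_i)=H(G_i,\omega)=\sum_j a_j(A,B)_{i,j}$. Here $H(G_i,\omega_j)=\int_{G_i}\omega_j=A_{i,j}$ for $j\le g$, since the residues vanish. This sum is zero by the first block rows of $M(U)$.
\end{itemize}

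I expect the main obstacle to be the justification that $T_D(K)_{\bQ}$ is exactly accounted for by functions $f$ with prescribed cusp values up to the appropriate $\bQ$-tensoring. One must check that every tuple in $\prod_Q k(Q)^\times$ is realized as $f|_D$ (weak approximation / Riemann--Roch, with divisor supported in $Y$), and that the decomposition into units and $\pi_\lambda$'s is consistent with the class-group torsion absorbed by $\otimes\bQ$. I would handle this by working throughout in $\otimes\bQ$ and invoking the surjectivity of $\sigma$ on $T_D(K)_{\bQ}$ from \cite[Lemma 3.4]{affchab1}.

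To conclude, take $F\in b_0+W$, so that $\sigma(F)=b+\sum_i t_iu_i$. Then $\int_F\omega$ equals the correction term, which is $c(b,\omega)+\sum_i t_i\sum_j a_jD(U)_{i,j}=c$, since $D(U)a=0$.
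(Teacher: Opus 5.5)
Your argument is correct, and its core computation matches the paper's. You decompose $F=\sum_i x_i(F)G_i+\div(f)$, apply the residue theorem to $\div(f)$, and split $f(Q)$ into a unit part (expanded in the $e_i$ modulo $\cO_K^\times$) times $\prod_\lambda\pi_\lambda^{v_\lambda(f(Q))}$. You then read off the valuations from $\sigma(\div(f))$.

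What differs is the packaging. The paper shows that $M(U)$ is the matrix of $\log_{J_Y}^\sharp$ with respect to the dual basis $(x_1,\dots,x_r,a_1,\dots,a_k,t_1,\dots,t_s)$ of $W^\vee$, and concludes $\int_F\omega=0$ on $W$. It then treats the translate by choosing an auxiliary $\beta\in k(X)^\times\otimes\bQ$ with $\beta(Q)=\prod_\lambda\pi_\lambda^{b_\lambda}$, so that $\sigma(\div(\beta))=b$, and applies the residue theorem once more to $\div(\beta)$.

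You instead prove that $H(\cdot,\omega)$ vanishes on all of $J_Y(K)_{\bQ}$, using only the blocks $A$, $B$, $C$. Here $H(\cdot,\omega)$ is $\int\omega$ minus the correction functional $z\mapsto\sum_{Q,\varphi}\varphi(\Res_Q\omega)\sum_\lambda z_\lambda\log\varphi(\pi_\lambda)$ applied to $\sigma$. So $\int\omega$ factors through $\sigma$, and you read off the value on $b_0+W$ directly from $\sigma(F)=b+\sum_i t_iu_i$ and the $D(U)$-rows.

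Your route is shorter and more conceptual. It needs neither the dimension count for $W$ nor the function $\beta$, and it explains why $c$ is simply the correction functional evaluated at $b$. The paper's route yields more: $\ker M(U)$ is \emph{exactly} the space of log differentials annihilating $\log_{J_Y}(W)$. You obtain only the inclusion the theorem requires.

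Two minor points.

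First, the ``main obstacle'' you anticipate does not occur. Every element of $J_Y(K)_{\bQ}$ is already of the form $\sum_i x_iG_i+\div(f)$ with $\div(f)$ supported in $Y$, and your first bullet handles an arbitrary such $f$. So you never need to realise prescribed cusp values or invoke surjectivity of $\sigma$. It is the paper's proof that needs such a realisation, to produce $\beta$.

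Second, your justification of $\sigma(\div(f))=\sum_\lambda v_\lambda(f(Q))[\lambda]$ is inaccurate. $\Phi(\div(f))$ is in general \emph{not} a combination of full fibres. Rather, $\Psi(\div(f))$ may be taken to be the full divisor of $f$ on $\cX$, whose vertical part can be a nontrivial combination of fibre components. It is this full divisor whose pullback to $\tilde\cQ$ has order $v_\lambda(f(Q))$ at $\lambda$. The formula itself is correct: it is \cite[Proposition~3.4]{affchab1}, which the paper cites at this point.
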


The constant in \eqref{eq:constants} can be expressed in terms of $b$, the $a_j$, and the residues of the basis differentials $\omega_j$.

\begin{proof}
By \cite[Proposition 3.7]{affchab1} and the surjectivity of $\sigma\colon J_Y(K)_{\bQ} \to V_D$, we see that $W$ is a subspace of $J_Y(K)_{\bQ}$ of dimension
	\[ \dim_{\bQ} W = n_1(D) + n_2(D) - \#|D| - \rank \cO_K^\times + r + s. \]
	
Similar to~\eqref{eq:chabauty-diagram}, we look at the composite %
	\begin{equation}
		\label{eq:rational-chabauty-diagram}
		\begin{tikzcd}
			W \rar[hook]  & J_Y(K)_{\bQ} \rar[hook] & J_Y(K_{\fp})_{\bQ} \rar["\log_{J_Y}"] & \rH^0(X_{K_{\fp}}, \Omega^1(D))^\vee. %
		\end{tikzcd}
	\end{equation}
	Dually, we get the $K_{\fp}$-linear map
	\begin{equation}
	\label{eq:ck-pullback-map}
		\log_{J_Y}^\sharp \colon \rH^0(X_{K_{\fp}}, \Omega^1(D)) \to \Hom(W, K_{\fp}) = W^\vee\otimes_{\bQ} K_{\fp}.
	\end{equation}
	We show that the matrix $M(U)$ represents this map for a suitable choice of bases. A basis of $\rH^0(X_{K_{\fp}}, \Omega^1(D))$ is given by $\cB = (\omega_1,\ldots,\omega_{g+n-1})$. We construct a basis of $W^\vee$ as follows.
	
	Since the classes of the divisors $G_1,\ldots,G_r$ chosen in step~\ref{item:divisors} form a basis of $J(K)_{\bQ}$, any $F \in \Div^0(Y)_{\bQ}$ can be written as
	\begin{equation}
		\label{eq:F-and-Gi}
		F = \sum_{i=1}^r x_i(F) G_i + \div(f)
	\end{equation}
	with uniquely determined $x_i(F) \in \bQ$ and with $f \in k(X)^\times \otimes \bQ$ determined up to scalars. The $x_i(F)$ depend only on the class of~$F$ in $J(K)_{\bQ}$; in particular, the $x_i$ are well-defined functions on $J_Y(K)_{\bQ}$ and on the subspace $W$. 
	
	Since $F$ and the $G_i$ have support in~$Y$, the function $f \in k(X)^\times_{\bQ}$ has no zeros nor poles at the cusps, so we can evaluate it at each $Q \in |D|$ to obtain $f(Q) \in k(Q)^\times_{\bQ} \coloneqq k(Q)^\times \otimes \bQ$. Using the basis $e_1,\dots,e_k$ of $\Bigl(\prod_Q \cO_{k(Q)}^\times\Bigr)/\cO_K^\times \otimes \bQ$ chosen in step~\ref{item:units} and the prime ideal generators $\pi_{\lambda}$ chosen in step~\ref{item:pi}, the values of $f$ at the cusps factor as
	\begin{equation}
		\label{eq:f-of-Q}
		(f(Q))_Q = \prod_{i=1}^{k} e_{i}^{a_{i}(F)} \cdot  \left(\prod_{\text{$\lambda$ prime of $\cO_{k(Q)}$}} \pi_{\lambda}^{v_\lambda(f(Q))}\right)_Q \quad \text{in} \quad \left.\left(\prod_Q k(Q)^\times_{\bQ}\right)\right/K_{\bQ}^\times,
	\end{equation}
	with $v_{\lambda} \colon k(Q)^\times_{\bQ} \to \bQ$ denoting the $\lambda$-adic valuation. The exponents $a_{i}(F) \in \bQ$ do not depend on the choice of~$f$.
	Indeed, rescaling $f$ by $\rho_{\fq}$ changes only the exponents of the $\pi_{\lambda}$, thanks to the factorisation \eqref{eq:pi-lambda-assumption}; and rescaling $f$ by a unit in $\cO_K^\times$ has no effect on either side of \eqref{eq:f-of-Q}.
	Moreover, the $a_{i}(F)$ depend only on the $D$-equivalence class of~$F$, since multiplying~$f$ by a function $g$ with $g\vert_D = 1$ does not change the values $f(Q)$. So the $a_{i}$ are well-defined functions on $J_Y(K)_{\bQ}$ and by restriction also on $W$.
	Once we choose a particular $f$, \eqref{eq:f-of-Q} says there exists a constant $\kappa \in K_{\bQ}^\times$ such that for each $Q\in |D|$ we have
	\begin{equation}
	\label{eq:f-of-Q2}
	f(Q)= \left(\prod_{i=1}^{k} e_{i,Q}^{a_{i}(F)}\right) \cdot  \left(\prod_{\text{$\lambda$ prime of $\cO_{k(Q)}$}} \pi_{\lambda}^{v_\lambda(f(Q))}\right) \cdot \kappa \quad \text{in} \quad k(Q)^\times_{\bQ}.
	\end{equation}
	
	For $F \in W$, we have $\sigma(F) \in U = \langle u_1,\ldots,u_s\rangle$ by definition, so we can write
	\begin{equation}
		\label{eq:t-ell}
		\sigma(F) = \sum_{i=1}^s t_{i}(F) u_i \qquad \text{in $V_D$},
	\end{equation}
	with uniquely determined $t_{i}(F) \in \bQ$. Now
	$\cC \coloneqq (x_1,\ldots,x_r, a_1,\ldots,a_k, t_1,\ldots, t_s)$
	is the desired basis of $W^\vee$.
	
	We now show that the matrix $M(U)$ represents the map $\log_{J_Y}^{\sharp}$ of~\eqref{eq:ck-pullback-map} in the bases~$\cB$ and~$\cC$. %
	Let $\omega \in \rH^0(X_{K_{\fp}}, \Omega^1(D))$ be a log differential and let $F$ be a $\bQ$-divisor on~$Y$ of degree zero representing an element of $W$. We want to compute $\log_{J_Y}^{\sharp}(\omega)(F) = \int_F \omega$. With~\eqref{eq:F-and-Gi} we have
	\begin{equation}
		\label{eq:int-F-eta}
		\int_F \omega = \sum_{i=1}^r x_i(F) \int_{G_i} \omega + \int_{\div(f)} \omega.
	\end{equation}
	When $\omega = \omega_j$ ($j=1,\ldots,g$) is a holomorphic differential, the integral over the principal divisor vanishes, %
	explaining the block matrix~$A$ and the zeros underneath it in the first~$g$ columns of~$M(U)$. In general, we use the $p$-adic residue theorem (\Cref{thm:residue-theorem}) to rewrite the integral over $\div(f)$ in~\eqref{eq:int-F-eta}:
	\begin{align}
		\int_{\div(f)} \omega = \sum_{\substack{Q \in |D|,\\ \varphi\colon k(Q) \to \Kpbar}} \varphi(\Res_Q \omega) \log \varphi(f(Q)) \label{eq:int-princ-div}. %
	\end{align}
	By \eqref{eq:f-of-Q2} we have
	\begin{equation}
		\label{eq:log-f-Q}
		\log \varphi(f(Q)) = \sum_{i=1}^{k} a_{i}(F) \log \varphi(e_{i,Q}) + \sum_{\text{$\lambda$ prime of $\cO_{k(Q)}$}} v_{\lambda}(f(Q)) \log \varphi(\pi_{\lambda}) + \log(\kappa).
	\end{equation}
	The valuations~$v_\lambda(f(Q))$ are encoded in $\sigma(\div(f))$ since we have
	\begin{equation}
		\label{eq:sigma-div-f}
		\sigma(\div(f)) = \sum_{Q \in |D|}\; \sum_{\text{$\lambda$ prime of $\cO_{k(Q)}$}} v_{\lambda}(f(Q)) [\lambda]
	\end{equation}
	by \cite[Proposition 3.4]{affchab1}. From~\eqref{eq:F-and-Gi} and~\eqref{eq:t-ell} we have
	\begin{align}
		\sigma(\div(f)) = \sigma(F) - \sum_{i=1}^r x_i(F) \sigma(G_i) 
		 = \sum_{i=1}^s t_{i}(F) u_i  - \sum_{i=1}^r x_i(F) \Psi(G_i).\tilde\cD. \label{eq:sigma-div-f-with-Gi}
	\end{align}
	This is an equality in $V_D$.
	Combining \eqref{eq:sigma-div-f} and \eqref{eq:sigma-div-f-with-Gi}, we conclude that there is a $z\in \bQ$ such that for all prime ideals $\fq$ of $K$, all $Q\in |D|$ and all $\lambda|\fq$ in $k(Q)$ we have %
	\begin{equation}
		\label{eq:v-lambda}
		v_\lambda(f(Q)) = \sum_{i=1}^s t_{i}(F) u_{i,\lambda}  - \sum_{i=1}^r x_i(F) i_\lambda(\Psi_{\fq}(G_i),\tilde{\cD}) + z e(\lambda|\fq).
	\end{equation}
	Putting everything together we obtain
	\begin{align*}
		\int_F \omega \overset{\eqref{eq:int-F-eta},\eqref{eq:int-princ-div}}&{=} \sum_{i=1}^r x_i(F) \int_{G_i} \omega + \sum_{\substack{Q \in |D|,\\ \varphi\colon k(Q) \to \Kpbar}} \varphi(\Res_Q \omega) \log \varphi(f(Q)) \\
		\overset{\eqref{eq:log-f-Q},\eqref{eq:v-lambda}}&{=} \sum_{i=1}^r x_i(F) \int_{G_i} \omega + \sum_{Q,\varphi} \varphi(\Res_Q \omega) \sum_{i=1}^{k} a_{i}(F) \log \varphi(e_{i,Q}) \\
		&\qquad - \sum_{Q,\varphi} \varphi(\Res_Q \omega) \sum_{\substack{\text{$\fq$ prime,}\\ \text{$\lambda|\fq$ in $\cO_{k(Q)}$}}} \Bigl(\sum_{i=1}^r x_i(F) i_{\lambda}(\Psi_{\fq}(G_i), \tilde \cQ) \Bigr) \log \varphi(\pi_{\lambda}) \\
		&\qquad + \sum_{Q,\varphi} \varphi(\Res_{Q} \omega) \sum_{i=1}^s \; t_{i}(F) \sum_{\substack{\text{$\fq$ prime,}\\ \text{$\lambda|\fq$ in $\cO_{k(Q)}$}}} u_{i,\lambda} \log \varphi(\pi_{\lambda_i}) \\
		&\qquad + \log(\kappa) \sum_{Q,\varphi} \varphi(\Res_{Q} \omega) + \sum_{Q,\varphi} \varphi(\Res_{Q} \omega) \sum_{\fq, \lambda|\fq} z e(\lambda|\fq) \log \varphi(\pi_\lambda).
	\end{align*}
Note that the summands in the final line vanish by assumption \eqref{eq:pi-lambda-assumption} and the residue formula $\sum_{Q,\varphi} \varphi(\Res_Q \omega) = 0$. Rearranging gives
\begin{align*}
		\int_F \omega &= \sum_{i=1}^r H(G_i, \omega) x_i(F) 
		+ \sum_{i=1}^{k} \left( \sum_{\substack{Q \in |D|,\\ \varphi\colon k(Q) \to \Kpbar}} \varphi(\Res_Q \omega) \log \varphi(e_{i,Q}) \right) a_{i}(F)  \\
		&\qquad + \sum_{i=1}^s \left( \sum_{\substack{Q \in |D|,\\ \varphi\colon k(Q) \to \Kpbar}} \varphi(\Res_{Q} \omega) \sum_{\substack{\text{$\fq$ prime,}\\ \text{$\lambda|\fq$ in $\cO_{k(Q)}$}}} u_{i,\lambda} \log \varphi(\pi_{\lambda}) \right) t_{i}(F). %
	\end{align*}
	This is indeed a linear combination of $x_i(F)$, $a_{i}(F)$, and $t_{i}(F)$. When we take~$\omega$ to be one of the basis differentials $\omega_{g+j}$ ($j=1,\ldots,n-1$), the coefficients form the $(g+j)$-th column of the matrix~$M(U)$, explaining the block matrices $B$, $C$, and $D(U)$ in the definition of~$M(U)$ and concluding the proof that $M(U)$ represents the map~\eqref{eq:ck-pullback-map} in the bases~$\cB$ and~$\cC$.
	
	If $(a_1,\ldots,a_{g+n-1})$ is in the kernel of $M(U)$ then $\omega \coloneqq \sum_{j=1}^{g+n-1} a_j \omega_j$ pulls back to zero under $\log_{J_Y}$, so that $\int_F \omega = 0$ for $F\in W$.
	If instead $F \in b_0 + W$, i.e. $\sigma(F) \in b + U$, then $\int_F \omega$ takes a constant value independent of $F$, which we compute as follows:
	We choose a function $\beta\in  k(X)^\times_{\bQ}$ satisfying $\beta(Q) = \prod_{\text{$\lambda$ prime of $\cO_{k(Q)}$}} \pi_\lambda^{b_\lambda}$ for all $Q$.
	Then $\sigma(\div(\beta)) = b$, so that $b_0 + W = \div(\beta) + W$ and hence, by \Cref{thm:residue-theorem-general}, every $F\in b_0 + W$ satisfies
	\[
		\int_F \omega = \int_{\div(\beta)} \omega = \sum_{\substack{Q \in |D|,\\ \varphi\colon k(Q) \to \Kpbar}} \varphi(\Res_Q \omega) \log \varphi\left(\textstyle\prod_\lambda \pi_\lambda^{b_\lambda}\right)  = c(b,\omega). \qedhere
	\]
\end{proof}

In order to show \Cref{thm:A}, let $\Sigma$ be an $S$-integral reduction type with cuspidal part $\Sigma^{\csp} = (\Sigma_{\fq})_{\fq\in S_0}$. We apply \Cref{thm:explicit-function} with $b+U \coloneqq \fS_{\bQ}(P_0,\Sigma)$ as defined in \Cref{sec:selmerset}.
That is, $b$ is defined as in \eqref{eq:b-lambda} and 
a basis of $U = U_{\bQ}(\Sigma^{\csp})$ is given by the elements $u_{\fq} = [\Sigma_{\fq}]$ for $\fq\in S_0$.
We will write $M(\Sigma^{\csp})$ for $M(U)$, $\omega=\omega(\Sigma^{\csp})$ for any $\omega(U)$ coming from \Cref{thm:explicit-function} and $c(P_0,\Sigma,\omega)$ for $c(b,\omega)$. 
The existence of a nonzero $\omega$ is guaranteed if \eqref{eq:chabauty-condition} holds.

\begin{proof}[Proof of \Cref{thm:A}]
	By \cite[Proposition 3.13]{affchab1}, $\AJ_{P_0}(\cY(\cO_{K,S})_\Sigma)$ is contained in $\Sel_{\bQ}(P_0,\Sigma) = \sigma^{-1}(\fS_{\bQ}(P_0,\Sigma))$, thus the claim follows from \Cref{thm:explicit-function}.
\end{proof}

When one knows enough $S$-integral points with the same cuspidal reduction type, one can also derive a Chabauty function in the form of a determinant. %

\begin{thm}
	\label{thm:determinant-equation-fixed-cuspidal-part}
	Let $S_0 \subseteq S$ and fix $\Sigma^{\csp} = (\Sigma_{\fq})_{\fq \in S_0}$ with $\Sigma_{\fq} \in (\cX_{\fq}^{\sm} \cap \cD)(\bF_{\fq})$.
	 Assume \eqref{eq:chabauty-condition} holds.
	 Then for any $g+n-1$ points $P_1,\ldots,P_{g+n-1} \in \cY(\cO_{K,S})$ whose reduction types $\Sigma_1,\ldots,\Sigma_{g+n-1}$ have cuspidal part~$\Sigma^{\csp}$, we have
	\begin{equation}
		\label{eq:determinant-equation-fixed-cuspidal-part}
		\det \begin{pmatrix}
		\int_{P_0}^{P_i} \omega_j - c(P_0, \Sigma_i, \omega_j)
	\end{pmatrix}_{1 \leq i,j \leq g+n-1} = 0.
	\end{equation}
\end{thm}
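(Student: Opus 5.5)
We show that the vector $(a_1,\dots,a_{g+n-1})$ appearing in \Cref{thm:explicit-function} lies in the kernel of the matrix in \eqref{eq:determinant-equation-fixed-cuspidal-part}, after rewriting it as a kernel vector in the column direction. All reduction types $\Sigma_i$ share the same cuspidal part, so they give the same $U = U_{\bQ}(\Sigma^{\csp})$, the same direction $W = W_{\bQ}(\Sigma^{\csp})$ and the same matrix $M(\Sigma^{\csp})$. Assumption \eqref{eq:chabauty-condition} says exactly that $M(\Sigma^{\csp})$ has fewer rows than columns, namely $\dim_{\bQ} W < g+n-1$. Hence its kernel in $K_{\fp}^{g+n-1}$ contains a nonzero vector $a = (a_1,\ldots,a_{g+n-1})$. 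Set $\omega = \sum_j a_j\omega_j$.

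Next we apply \Cref{thm:A} (equivalently \Cref{thm:explicit-function} with $b = b(P_0,\Sigma_i)$) to each point $P_i$. This gives
\[ \int_{P_0}^{P_i}\omega = c(P_0,\Sigma_i,\omega) \quad \text{for } 1 \le i \le g+n-1. \]
We then use linearity in $\omega$ on both sides. The integral $\int_{P_0}^{P_i}\omega$ is $K_{\fp}$-linear in $\omega$, since it is $\log_{J_Y}(\AJ_{P_0}(P_i))$ evaluated at $\omega$. The constant $c(b,\omega)$ in \eqref{eq:constants} is also linear in $\omega$, because it depends on $\omega$ only through the residues $\Res_Q\omega$, and the map $\omega \mapsto \Res_Q\omega$ is linear. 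So for each $i$,
\[ \sum_{j} a_j\Bigl(\int_{P_0}^{P_i}\omega_j - c(P_0,\Sigma_i,\omega_j)\Bigr) = 0. \]
This says that the $(g+n-1)\times(g+n-1)$ matrix $\bigl(\int_{P_0}^{P_i}\omega_j - c(P_0,\Sigma_i,\omega_j)\bigr)_{i,j}$ sends the nonzero vector $a$ to $0$. Therefore its determinant vanishes.

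The one point needing care is that $c(P_0,\Sigma_i,\omega_j)$ must be defined for each basis differential $\omega_j$ individually, even though $\omega_j$ need not lie in the kernel. Formula \eqref{eq:constants} makes sense for any log differential, so I read $c(P_0,\Sigma_i,\omega_j)$ as that formula applied to $\omega_j$. The linearity identity $c(b,\omega) = \sum_j a_j\,c(b,\omega_j)$ is then immediate. No step seems to be a real obstacle. The only subtle points are this linearity and the observation that one kernel vector works for all $i$ simultaneously, which holds because $M(\Sigma^{\csp})$ depends only on the cuspidal part.
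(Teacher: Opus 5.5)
Your proposal is correct and follows the paper's proof essentially verbatim. The paper also takes a nonzero kernel vector of $M(\Sigma^{\csp})$, applies \Cref{thm:A} to each $P_i$, and uses linearity of $c(P_0,\Sigma_i,\omega)$ in $\omega$ to conclude that the columns of the matrix are linearly dependent.
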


\begin{proof}
	The condition~\eqref{eq:chabauty-condition} implies that the matrix $M(\Sigma^{\csp})$ defined in \eqref{eq:MSigma} has nontrivial kernel. Let $0 \neq (a_1,\ldots,a_{g+n-1}) \in K_{\fp}^{g+n-1}$ be an element of the kernel and set $\omega = \sum_{j=1}^{g+n-1} a_j \omega_j$. By \Cref{thm:A}, we have
	$ 0 = \int_{P_0}^{P_i} \omega - c(P_0,\Sigma_i,\omega) = \sum_{j=1}^{g+n-1} a_j ( \int_{P_0}^{P_i} \omega_j - c(P_0, \Sigma_i, \omega_j)) $
	for all $i$, where we used that $c(P_0,\Sigma_i,\omega)$ is linear in~$\omega$. %
	Thus the columns of the matrix in~\eqref{eq:determinant-equation-fixed-cuspidal-part} are linearly dependent. %
\end{proof}

\section{Examples}
\label{sec:examples}

\subsection{Even hyperelliptic curves}
\label{sec:split-hyperelliptic}

Let $\cY/\bZ$ be an affine hyperelliptic curve defined by an equation $y^2 = f(x)$ with $f \in \bZ[x]$ squarefree of degree~$2g+2$ such that the leading coefficient of~$f$ is a square in~$\bZ$. Suppose that the Mordell--Weil rank~$r$ equals~$g$. Let~$p$ be a prime of good reduction. In \cite{GM:LinearQuadraticChabauty}, the authors use $p$-adic heights to construct a nontrivial function $\rho\colon \cY(\bZ_p) \to \bQ_p$ and a finite set~$T$ such that $\rho(\cY(\bZ)) \subseteq T$. We explain how such functions can be obtained with our methods, without the need to compute $p$-adic heights above~$p$ for some choice of complementary subspace of the holomorphic differentials in $\rH^1_{\dR}(X/ \bQ_p)$.

Let $Y/\bQ$ be the generic fibre of~$\cY$. The smooth compactification~$X$ of~$Y$ is obtained by taking the closure in the weighted projective space $\bP(1,g+1,1)$ over~$\bQ$. Denote the leading coefficient of~$f$ by~$d$. Since~$d$ is assumed to be a square, there are two $\bQ$-rational points at infinity:
\begin{align*}
	\infty_+ = (1 : \sqrt{d} : 0), \quad
	\infty_- = (1 : -\sqrt{d} : 0).
\end{align*}
Let $D = \{\infty_+, \infty_-\}$. Let $\cX$ be the model of~$X$ defined by the same equation in the weighted projective space $\bP(1,g+1,1)$ over~$\bZ$, and let $\cD$ be the closure of~$D$, so that $\cY = \cX \smallsetminus \cD$. 
We have $r = g$, $\#S = 0$, $\#|D| = 2$, $n_2(D) = 0$, so the Affine Chabauty Condition~\eqref{eq:chabauty-condition-over-number-field-intro} is satisfied.
Note that the model given by the equation $y^2 = f(x)$ might not be regular, so one first needs to find a desingularisation $\cX' \to \cX$, using for example the \texttt{RegularModel} function in Magma.
\cite[Theorem A]{affchab1} ensures the existence of a non-zero log differential $\omega \in \rH^0(X_{\bQ_p}, \Omega^1(D))$ such that the function $P \mapsto \int_{P_0}^P \omega$ is constant on $\cY(\bZ)_{\Sigma}$ for each integral reduction type~$\Sigma$. The differential~$\omega$ can be found by the methods of \Cref{sec:explicit}. A lot of the considerations simplify since both cusps are defined over~$\bQ$ and we are looking for integral points, so all reduction types have the same (empty) cuspidal part. If one knows sufficiently many integral points to start with, \Cref{thm:determinant-equation-fixed-cuspidal-part} yields a Chabauty function in the form of a determinant. In general, one needs to construct the matrix $M$ from \eqref{eq:MSigma} and compute its kernel. First we find degree zero divisors $G_1,\ldots,G_r$ supported on~$Y$ spanning a subgroup of finite index in the Jacobian $J(\bQ)$. A basis of $\rH^0(X, \Omega^1(D))$ is given by $\omega_0,\ldots,\omega_g$ where $\omega_j = x^j \tfrac{\rd x}{y}$ (note the index shift when compared to \ref{item:differentials}). The first~$g$ of these differentials are holomorphic and the last one has simple poles at the points at infinity with residues
\[ \Res_{\infty_+}(\omega_g) = -\tfrac1{\sqrt{d}}, \quad  \Res_{\infty_-}(\omega_g) = \tfrac1{\sqrt{d}} \]
by a similar computation as in \cite[Proposition~3.9]{GM:computing-p-adic-heights}. 
The $r \times (g+1)$-matrix~$M$ defined in \eqref{eq:MSigma} consists of the matrix $A$ defined in \eqref{eq:matrixA}, i.e.\ $M_{i,j} = \int_{G_i} \omega_{j-1}$ for $1\leq i \leq r$, $1 \leq j \leq g$, and a last column~$B$ whose entries are given by \eqref{eq:H}:
\begin{equation}
	\label{eq:hyp-M-entries}
	M_{i,g+1} = \int_{G_i} \omega_g + d^{-1/2}\sum_{\text{$\ell$ prime}} i_{\ell}(\Psi_{\ell}(G_i), \cQ'_+ - \cQ'_-) \log(\ell),
\end{equation}
where $\cQ_{\pm}'$ denotes the closure of~$\infty_{\pm}$ in the regular model~$\cX'$.
The Coleman integrals can be computed using the algorithm from \cite{balakrishnan:even-degree-coleman-integration} whose implementation in SageMath is available from \url{https://github.com/jbalakrishnan/AWS}. For this one should assume that the divisors~$G_i$ are supported in $\bQ_p$-rational points. The intersection numbers times $\log(\ell)$ in~\eqref{eq:hyp-M-entries} are minus the local heights $h_{\ell}(G_i, \infty_+ - \infty_-)$ away from~$p$, which appear also in \cite[§3.2]{GM:LinearQuadraticChabauty}.
We can then compute the kernel of the matrix~$M$. Since the Mordell--Weil rank~$r$ equals~$g$ by assumption, we find a nontrivial element $(a_0,\ldots,a_g) \in \bQ_p^{g+1}$ in the kernel. By \Cref{thm:A}, the log differential
\[ \omega \coloneqq a_0 \omega_0 + \dots + a_g \omega_g \]
has the property that the function $\rho\colon \cY(\bZ_p) \to \bQ_p$, $\rho(P) = \int_{P_0}^P \omega$ is constant on $\cY(\bZ)_{\Sigma}$ for every reduction type~$\Sigma$. %
The reduction type~$\Sigma = (\Sigma_{\ell})_{\ell}$ needs to be specified on the regular model~$\cX'$ (see \cite[\S 3.4]{affchab1}), i.e.\ $\Sigma_{\ell}$ is a multiplicity-one component of the mod-$\ell$ fibre $\cX'_{\ell}$.
The constant value~$c(P_0,\Sigma,\omega)$ of $\rho$ on $\cY(\bZ)_{\Sigma}$ is %
\begin{equation}
	\label{eq:split-hyperelliptic-constants-final}
	c(P_0,\Sigma,\omega) = \frac{a_g}{\sqrt{d}} \sum_{\ell \neq p} \Bigl(i_{\ell}(\cP_0', \cQ_+' - \cQ_-') - i_{\ell}(\Phi_{\ell}(\Sigma_{\ell} - \cpt_{\ell}(P_0)), \cQ_+' - \cQ_-') \Bigr) \log(\ell)
\end{equation}
by \eqref{eq:constants} and \eqref{eq:b-lambda}.
Here, the intersection numbers are computed on the regular model $\cX'$,  $\cP_0'$ denotes the closure of~$P_0$ in~$\cX'$, $\cpt_{\ell}(P_0)$ denotes the component of~$\cX'_{\ell}$ onto which the base point~$P_0$ reduces, and $\log(p)=0$ by our choice of branch of the $p$-adic logarithm.

\begin{rem}
	We can similarly calculate integral points on $\cY$ if the leading coefficient $d$ of $f$ is negative.
	In this case, there is one cusp $(1:\sqrt{d}:0)$ with imaginary quadratic residue field $\bQ(\sqrt{d})$, thus \eqref{eq:chabauty-condition-over-number-field-intro} holds if $r=g$.
\end{rem}

We found many examples where our method computes exactly the set of integral points. We illustrate this by revisiting \cite[Example 4.4]{GM:LinearQuadraticChabauty} and determining the $\bZ$-points of the genus-2 curve with LMFDB label \href{https://www.lmfdb.org/Genus2Curve/Q/6081/b/164187/1}{6081.b.164187.1} defined by
\[ \cY\colon\quad y^2 = x^6 + 2x^5 - 7x^4 - 18x^3 + 2x^2 + 20x + 9. \]

\begin{thm}
	\label{thm:example-split}
	The set $\cY(\bZ)$ is equal to
	\[ \left\{ (-1,\pm 1), \; (0, \pm 3), \; (1, \pm 3), \; (-2,\pm 3), \; (-4, \pm 37) \right\}. \]
\end{thm}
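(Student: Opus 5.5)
We will run the algorithm of \Cref{sec:algorithm} for this curve, in the form specialised in \Cref{sec:split-hyperelliptic}: $g=2$, leading coefficient $d=1$, two rational cusps $\infty_\pm$, $S=\emptyset$. The LMFDB records Mordell--Weil rank $r=2=g$, so \eqref{eq:chabauty-condition-over-number-field-intro} holds. All reduction types have empty cuspidal part, so a single matrix $M$ of size $2\times 3$ serves every $\Sigma$.

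\textbf{Setting up $M$.} The discriminant is supported at $3$ and $2027$ (conductor $6081=3\cdot 2027$), plus whatever the weighted model forces at~$2$.
\begin{enumerate}
\item Compute a regular model $\cX'$ with \texttt{RegularModel}, and record the components and intersection matrices of the bad fibres.
\item Choose divisors $G_1,G_2$ supported on $Y$ spanning a finite-index subgroup of $J(\bQ)$. The known points should suffice, e.g.\ $G_1=(0,3)-(1,3)$ and $G_2=(-1,1)-(-2,3)$; check independence via the height pairing.
\item Compute the vertical corrections $\Phi_\ell(G_i)$ via \cite[Lemma~2.7]{affchab1} and the intersection numbers in \eqref{eq:hyp-M-entries}. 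Only primes where $\cX'$ is bad or where $\cG_i$ meets $\cQ'_\pm$ contribute.
\item Fix a good prime $p$, say $p=5$ or $7$. Compute the Coleman integrals $\int_{G_i}\omega_j$ for $j=0,1,2$ with the even-degree code of \cite{balakrishnan:even-degree-coleman-integration}.
\item Take a kernel vector $(a_0,a_1,a_2)$ of $M$ and set $\omega=a_0\omega_0+a_1\omega_1+a_2\omega_2$.
\end{enumerate}
As a consistency check, \Cref{thm:determinant-equation-fixed-cuspidal-part} applied to three known integral points should give the same function up to scaling.

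\textbf{Constants and zeros.} Next enumerate the integral reduction types $\Sigma$. These are the choices of a multiplicity-one component containing an $\bF_\ell$-point at each bad $\ell$ (only finitely many $\ell$ matter). For each $\Sigma$, compute $c(P_0,\Sigma,\omega)$ via \eqref{eq:split-hyperelliptic-constants-final} with base point, say, $P_0=(0,3)$. Then, on each residue disc of $\cY(\bZ_p)$ (the affine $\bF_p$-points, excluding discs at infinity), expand $\int_{P_0}^P\omega$ as a power series in a local parameter. Using Newton polygons or Strassmann bounds, find with finite precision all zeros of $\rho-c(P_0,\Sigma,\omega)$ for all $\Sigma$ that are compatible with that disc (reduction types and residue discs interact only through the fibre at~$p$, which is trivial, so we take all $\Sigma$). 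By \Cref{thm:A}, $\cY(\bZ)$ lies in the union of these zero sets.

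\textbf{Eliminating extra roots; main obstacle.} Each of the ten listed points must show up as a zero. Any further $p$-adic zeros must be shown not to be integral. First, check whether a zero agrees to the working precision with a known point, and compare root counts against the Strassmann bound in that disc. Zeros that remain I would try to eliminate by repeating the whole computation with a second prime $p'$ and intersecting via a Mordell--Weil sieve style argument: a genuine integral point must appear in both sets with compatible reductions. A cheaper alternative is to use the second prime simply to show each spurious disc-residue class contains no common solution. I expect the main obstacle to be exactly this: when several reduction types give distinct constants, the number of spurious $p$-adic solutions can be sizeable, and the accuracy of the Coleman integrals together with the non-integral local contributions $\log\ell$ must be controlled to certify the root counts. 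The intersection-theoretic bookkeeping at the bad fibre $2027$ (and possibly $2,3$) is routine but error-prone, and I would verify it by checking that $\rho(P)=c(P_0,\Sigma_P,\omega)$ for every known point $P$.
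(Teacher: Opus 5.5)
Your outline is the same Affine Chabauty computation the paper carries out: the $2\times 3$ matrix $M$, a kernel differential, the constants $c(P_0,\Sigma,\omega)$, and Strassmann on residue discs. But it stops where the proof has its content. You never show that the zero locus is exactly the ten known points. Instead you anticipate distinct constants for different reduction types, nonzero $\log\ell$ corrections, and possibly a Mordell--Weil sieve.

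\textbf{The missing observation.} None of those complications occurs. Your $G_i$ are differences of integral points, so every correction term in \eqref{eq:hyp-M-entries} vanishes.
\begin{itemize}
\item The horizontal part vanishes because integral points never meet $\cQ'_\pm$.
\item The vertical part vanishes because $\infty_+$ and $\infty_-$ reduce onto the same component of every fibre of $\cX'$. For $\ell\neq 3$ the fibre has only one component. For $\ell=3$ the cusps reduce to smooth points of the irreducible mod-$3$ fibre of $\cX$, and the desingularisation is an isomorphism over those points, so both lie on the strict transform.
\end{itemize}
The same two facts kill both terms of \eqref{eq:split-hyperelliptic-constants-final} for an integral base point, so $c(P_0,\Sigma,\omega)=0$ for every $\Sigma$. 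Consequently:
\begin{itemize}
\item $M$ consists of Coleman integrals only;
\item you never need $\Phi_\ell$, intersection matrices, or an enumeration of reduction types;
\item a single function $\rho(P)=\int_{P_0}^P\omega$ vanishes on all of $\cY(\bZ)$.
\end{itemize}

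\textbf{The choice of prime.} The second ingredient is to take $p=7$ rather than $5$. The values of $f$ mod $7$ are never $0$, and they are non-squares at $x\equiv 2,4 \pmod 7$. Hence the affine $\bF_7$-points of $\cY$ are exactly the reductions of the ten known integral points, one per residue disc. It then suffices to check that $\rho$ has Strassmann bound $1$ on each disc; for instance, at $(-1,1)$ the coefficient valuations begin $\infty,1,2,3,4,6,7,\dots$. This finishes the proof with no sieving.

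With $p=5$ you would have extra work. The points $(1,-3)$ and $(-4,37)$ share a disc, and the two discs over $x\equiv 2 \pmod 5$ contain no known point, so you would have to rule out zeros there.

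As written, your proposal is a correct recipe but not yet a proof. The decisive step, that there are no further $p$-adic zeros, is neither carried out nor reduced to a check that is guaranteed to succeed.
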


The model~$\cX$ given by the projective closure of~$Y$ in $\bP^2_{\bZ}$ has bad reduction at $2$, $3$, and $2027$. The model~$\cX_1/\bZ$ defined by the minimal equation
\[ y^2 + (x^3+x^2+1)y = -2x^4 - 5x^3 + 5x + 2 \]
has good reduction at~2 and the map $\cX_1 \to \cX$, $(x,y) \mapsto (x, 2y + x^3 + x^2 + 1)$ is an isomorphism over all odd primes, so the only bad primes for~$\cX_1$ are $3$ and $2027$. The \texttt{RegularModel} function in Magma shows that $\cX_1$ is regular in the fibre over~2027 but not over~3. The affine part of the mod-3 fibre of $\cX$ (and hence of~$\cX_1$) is given by the equation
\[ y^2 = x(x^3 + x^2 + 2)(x+2)^2 \]
which is irreducible and has an ordinary double point at $(\overline{1},\overline{0}) \in \cY(\bF_3)$.
Let $\cX' \to \cX_1 \to \cX$ be a desingularisation.

We choose the base point $P_0 = (-1,1) \in \cY(\bZ)$ and the auxiliary prime~$p = 7$.
Setting $P_1 = (0,3)$ and $P_2 = (1,3)$, the classes of the divisors $G_i = P_i - P_0$ ($i = 1,2$) generate a subgroup of finite index in the Mordell--Weil group. The intersection numbers $i_{\ell}(\Psi_{\ell}(G_i), \cQ_+' - \cQ_-')$ in~\eqref{eq:hyp-M-entries} are all zero: for the horizontal part $i_{\ell}(\cP_i - \cP_0, \cQ_+' - \cQ_-')$ this follows from $\cP_0,\cP_1,\cP_2$ being integral points. And all vertical divisors~$V$ in $\cX_{\ell}'$ satisfy $i_{\ell}(V, \cQ_+' - \cQ_-') = 0$ since the points $Q_{+}$ and $Q_-$ reduce onto the same component of~$\cX_{\ell}'$. For $\ell \neq 3$ this is clear since~$\cX'_{\ell}$ has only one component; for $\ell = 3$ this follows from the fact that the points at infinity reduce to smooth points of the mod-3 fibre of the non-regularised model~$\cX$, and since the desingularisation $\cX' \to \cX$ is an isomorphism over the non-singular locus, they still reduce onto the same component of~$\cX_3'$. We find that the matrix~$M$ is given by
\begin{equation*}
	M = \begin{pmatrix}
		\int_{P_0}^{P_1} \omega_0 & \int_{P_0}^{P_1} \omega_1 & \int_{P_0}^{P_1} \omega_2 \\[1mm]
		\int_{P_0}^{P_2} \omega_0 & \int_{P_0}^{P_2} \omega_1 & \int_{P_0}^{P_2} \omega_2
	\end{pmatrix},
\end{equation*}
without the correction terms in the last column. A nontrivial element of the kernel is $(a_0,a_1,a_2) \in \bQ_7^3$ with
\begin{align*}
	a_0 &= 1 + O(7^8), \\
	a_1 &= 5 + 3\cdot 7 + 3\cdot 7^2 + 5\cdot 7^3 + 3\cdot 7^4 + 2\cdot 7^6 + 2\cdot 7^7 + O(7^8), \\
	a_2 &= 5 + 6\cdot 7 + 6\cdot 7^2 + 7^3 + 4\cdot 7^4 + 6\cdot 7^5 + 5\cdot 7^6 + 3\cdot 7^7 + O(7^8).
\end{align*}
Setting $\omega \coloneqq a_0 \omega_0 + a_1 \omega_1 + a_2 \omega_2$, the function $\rho\colon \cY(\bZ_p) \to \bQ_p$, $\rho(P) = \int_{P_0}^P \omega$ takes a constant value $c(P_0,\Sigma,\omega)$ on all integral points of a fixed reduction type~$\Sigma$. By the same argument as above, the intersection numbers in~\eqref{eq:split-hyperelliptic-constants-final} vanish, so the constants $c(P_0,\Sigma,\omega)$ are all zero. The function~$\rho$ thus vanishes on all of~$\cY(\bZ)$. In particular, it is not necessary to distinguish between points of different reduction types.
We can analyse the zeros of~$\rho$ on each residue disc by computing the power series expansion in a uniformising parameter. For example, consider the point $P_0 = (-1,1)$. Its residue disc is parametrised by $Q(t) = (x(t), y(t))$ with
\begin{align*}
	x(t) = -1 + 7t,\quad 
	y(t) = \sqrt{f(x(t))} = \sum_{n=0}^\infty \binom{1/2}{n}(f(-1 + 7t)-1)^n %
\end{align*}
and in terms of the parameter~$t$, the function~$\rho$ is given by
\begin{align*}
	\rho(Q(t)) = \sum_{j=0}^2 a_j \int_{P_0}^{Q(t)} x(t)^j \frac{\rd x(t)}{y(t)} = (7 + 3\cdot 7^2) t + 6\cdot 7^2 \cdot t^2 + O(7^3,t^3)
\end{align*}
The sequence of valuations of the coefficients starts $(\infty, 1, 2, 3, 4, 6, 7, \ldots)$, so by Strassmann's Theorem (see for example \cite[Theorem~5.6.1]{gouvea}), the only zero of $\rho(Q(t))$ in~$\bZ_7$ is at $t = 0$. This shows that $P_0 = (-1,1)$ is the only integral point of~$\cY$ in its residue disc. Each of the remaining residue discs contains one of the known integral points, and in each case one can show with Strassmann's Theorem that there are no other integral points in that disc, proving \Cref{thm:example-split}. Sage code for computing the vanishing locus of the function~$\rho$ in this example, for any auxiliary prime~$p$, can be found at \url{https://github.com/martinluedtke/AffChab2}.

\subsection{A class of superelliptic curves}
\label{sec:superelliptic}
We now discuss an example where our method can compute $S$-integral points for nonempty $S$.
Namely, we aim to determine the $S$-integral points on affine curves of the form
\begin{equation}
	\label{ex:supercurves}
	\cY\colon \quad y^3 = x^3 + ax^2 + x,
\end{equation}
where $a \in \bZ\setminus\{\pm 2\}$ is a fixed parameter.
Let $\cX$ be the closure of~$\cY$ in $\bP^2_{\bZ}$, let $\cD = \cX \smallsetminus \cY$, and denote by $X,Y,D$ the generic fibres. The curve~$X$ is the smooth compactification
of~$Y$. 
We always have the rational point $P_0 = (0,0) \in Y(\bQ)$, which we use as base point.
The divisor of cusps~$D$ consists of the rational point $Q_1 = (1 : 1: 0)$ and the degree-2 point $Q_2 = \{(1 : \zeta_3 : 0), (1 :\zeta_3^{-1} : 0)\}$, where $\zeta_3$ is a primitive third root of unity.
Setting $(u,v) \coloneqq (y/x, 1/x)$, an affine chart containing the cusps is given by $v^2 + av = u^3 - 1$, a Weierstrass equation for an elliptic curve. Note that this change of coordinates does not preserve $S$-integrality, so the problem of finding $S$-integral points of~$\cY$ is different from finding $S$-integral points on an elliptic curve. (This is why we refer to~\eqref{ex:supercurves} as a superelliptic equation even though it defines a genus~1 curve.) 
We see that $n_1(D) = n_2(D) = 1$, $n = 3$, and $g = 1$. Let $r$ be the rank of $J(\bQ) = \Jac_X(\bQ)$. Assumption \eqref{eq:chabauty-condition-over-number-field-intro} is satisfied whenever $r+\#S\leq 2$. We focus on the most interesting case when $r = \#S = 1$. The model~$\cX \subseteq \bP^2_{\bZ}$ has singular fibres over~$3$ and the prime divisors of $a \pm 2$. In order to apply the Affine Chabauty method for a given~$a$, one first needs to compute a regular model by performing blow-ups in the singular points if necessary. One then needs to compute intersection numbers and find a Mordell--Weil basis, i.e.\ perform \ref{item:algo-D-transversal-model}--\ref{item:algo-correction-divisors} of \Cref{sec:algorithm}. Before giving an explicit example, let us address steps \ref{item:algo-log-differentials} and \ref{item:algo-integrals}.

The log differentials
\[ \omega_1 = \frac{\rd x}{y^2} = -\frac{3 \rd u}{2v + a} ,\quad \omega_2 = \frac{x \,\rd x}{y^2} = -\frac{3 \rd u}{v(2v+a)} , \quad \omega_3 = \frac{\rd x}{y} = -\frac{3u \rd u}{v(2v+a)} \]
form a basis of $\rH^0(X, \Omega^1(D))$. Their residues $\Res_{Q_i}(\omega_j)$ are
\[
	\begin{array}{c|ccc}
		& \omega_1 & \omega_2 & \omega_3 \\
		\hline
		\rule{0pt}{2.6ex} %
		Q_1 & 0 & -1 & -1 \\[1mm]
		Q_2 & 0 & -\zeta_3 & -\zeta_3^{-1}
	\end{array}
\]
Here, $\zeta_3 =u(Q_2) \in k(Q_2)$ is a primitive third root of unity in the residue field of~$Q_2$. Fix an auxiliary prime $p \equiv 1 \pmod 3$ where $\cX$ has good reduction so that $\bQ_p$ contains a primitive third root of unity. Let us explain how to compute the Coleman integrals $\int_P^Q \omega_j$ of the basis differentials. For this it is convenient to work with the short Weierstrass equation $v'^2 = u'^3 + a^2/4 - 1$ in the coordinates $(u',v') \coloneqq (u,v + a/2)$. One has $\omega_1 = -3 \frac{\rd u'}{2v'}$, which is an invariant differential and can be integrated using the function \texttt{coleman\_integrals\_on\_basis} from \url{https://github.com/jbalakrishnan/AWS}. For the non-holomorphic differentials $\omega_2$ and $\omega_3$ we write $\omega_j = \omega_j^+ + \omega_j^-$, where $\omega_j^{\pm} = (\omega_j \pm \iota^* \omega_j)/2$ are the symmetric and anti-symmetric component of $\omega_j$ under the hyperelliptic involution $\iota\colon (u',v') \mapsto (u', -v')$, respectively. The symmetric differentials are pulled back from $\bP^1$ via the hyperelliptic morphism $u'\colon X \to \bP^1$. Switching to the coordinate $w \coloneqq 1/u'$ and using partial fraction decomposition one finds
\begin{align*}
	\omega_2^+ &= -\frac{3}{2} \frac{\rd u'}{u'^3 - 1} = -\frac12 \sum_{\zeta^3 = 1} \frac{\zeta^{-1}}{w - \zeta} \rd w,\\
	\omega_3^+ &= -\frac32 \frac{u' \, \rd u'}{u'^3 - 1} = -\frac12 \sum_{\zeta^3 = 1} \frac{\zeta}{w - \zeta} \rd w.
\end{align*}
We can compute these integrals on $\bP^1$ using $\int_P^Q \frac{\rd w}{w - \zeta} = \log(\frac{w(Q) - \zeta}{w(P) - \zeta})$. For the antisymmetric differentials we find
\begin{align*}
	\omega_2^- &= -\frac{3}{4v'} \frac{a}{u'^3 - 1} \rd u' = -\sum_{\zeta^3 = 1} \zeta \frac{a/2}{u' - \zeta} \frac{\rd u'}{2v'},\\
	\omega_3^- &= -\frac{3u'}{4v'} \frac{a}{u'^3 - 1} \rd u' = -\sum_{\zeta^3 = 1} \zeta^{-1} \frac{a/2}{u' - \zeta} \frac{\rd u'}{2v'}.
\end{align*}
The integrals $\int_P^Q \frac{a/2}{u' - \zeta} \frac{\rd u'}{2v'}$ can be computed using a trick from \cite[(6.2)]{GM:computing-p-adic-heights}. Let $X_{\zeta}/\bQ_p$ be the even-degree genus-1 curve defined by $t^2 = \frac{4}{a^2}s^4((\zeta + s^{-1})^3 + a^2/4 - 1)$. The map $\tau_{\zeta}\colon X \to X_{\zeta}$, $(u',v') \mapsto (1/(u' - \zeta), -2v'/(a(u'-\zeta)^2))$ sends the points $(u',v') = (\zeta,\pm a/2)$ to the points at infinity of $X_{\zeta}$ and satisfies 
\[ \tau_{\zeta}^*\left(\frac{s \, \rd s}{2t}\right) = \frac{a/2}{u' - \zeta} \frac{\rd u'}{2v'}, \]
so the integrals can be computed on~$X_{\zeta}$ as
\[ \int_P^Q \frac{a/2}{u' - \zeta}\frac{\rd u'}{2v'} = \int_{\tau_{\zeta}(P)}^{\tau_{\zeta}(Q)} \frac{s \, \rd s}{2t}. \]
The integral of this particular differential form can be computed by the SageMath function \texttt{coleman\_integrals\_on\_basis}, too. %
The code cannot currently handle endpoints in infinite residue discs, which means that on the original curve the endpoints $P$ and $Q$ need to lie outside of the residue discs of the cusps $Q_1$, $Q_2$ and their images $\iota(Q_1)$, $\iota(Q_2)$ under the hyperelliptic involution. 

\begin{example}\label{exa:superell}
	We specialise to the case $a=1$.
	Now $X$ has Mordell--Weil rank $r=1$ and the model $\cX \subseteq \bP^2_{\bZ}$ given by equation \eqref{ex:supercurves} is regular, only has bad reduction at $3$ and every fibre has only one component. The closures $\cQ_1$ and $\cQ_2$ of the cusps $Q_1$ and $Q_2$ in~$\cX$ are normal but they intersect in the mod-3 fibre in the point $(\overline{1} \colon \overline{1} \colon \overline{0}) \in \cX_3(\bF_3)$, so the model is not $D$-transversal over~$3$. We choose $S = \{487\}$, where a non-obvious $S$-integral point is given by $(x,y)=(\tfrac{216}{487},\tfrac{438}{487})$, and demonstrate our method for the reduction type $\Sigma = (\Sigma_q)_q$ where $\Sigma_q$ is the unique component of $\cX_q^{\sm} \smallsetminus\cD_q$ for $q \neq 487$, and $\Sigma_{487} \in \cX(\bF_{487})$ is the point on~$\cQ_2 \cong \Spec(\bZ[\zeta_3])$ defined by the prime ideal $(2\zeta_3 + 23)$, or equivalently the point $(\overline{1} : \overline{232} : \overline{0}) \in \cX(\bF_{487})$.
	The result is a Coleman function that vanishes on all $\{487\}$-integral points of reduction type~$\Sigma$. We compute its vanishing locus on all residue discs of $\cY(\bZ_p)$ on which we can compute the Coleman integrals (those with $x \not\equiv -1 \bmod p$), and confirm that it indeed vanishes on the $\{487\}$-integral point $(\tfrac{216}{487},\tfrac{438}{487})$ of reduction type~$\Sigma$. 
	We follow the steps in \Cref{sec:explicit}.
	\begin{enumerate}[label=(\arabic*)]
		\item $G=A-P_0$ with $A = (1/18, 7/18)$ and $P_0 = (0,0)$ (in the coordinates $(x,y)$) generates an index $2$ subgroup of $J(\bQ)$.
		\item For all primes $q$, the special fibre $\cX_q$ has only one component, so we take $\Phi_q(G)=0$.
		\item We use $\omega_j$ as above. %
		
		\item As the residue fields of both cusps have unit rank $0$, this step is irrelevant.
		\item \label{item:b=zero} We use $\fS_{\bQ}(P_0,\Sigma) = b + U(\Sigma^{\csp})$.
		We calculate $b=0$ using \eqref{eq:b-lambda}.
		Indeed, the first summand vanishes as $P_0$ never reduces to a cusp; the second summand vanishes as $\cpt(\Sigma_q) = \cpt_q(P_0)$ for all primes $q$.
		A basis of $U(\Sigma^{\csp})$ is given by $u = [\Sigma_{487}]$.
		\item We choose $q$ as the generator of the prime ideal $q\bZ$ for every $q$.
		\item We take $\pi_2 = 2$ and $\pi_3 = 3$ as prime ideal generators in $k(Q_1)=\bQ$; for $k(Q_2)=\bQ(\zeta_3)$, we take $\pi_{(\sqrt{-3})}=\sqrt{-3}\coloneqq 1+2\zeta_3$ and $\pi_{(2\zeta_{3}+23)} = 2\zeta_3 + 23$. 
	\end{enumerate}
			
	Choose an auxiliary prime $p \equiv 1 \bmod 3$, a primitive third root of unity $\zeta_3\in \bQ_p$ and put $\sqrt{-3}\coloneqq 1+2\zeta_3\in\bQ_p$.
	Then the matrix $M(\Sigma^{\csp})$ of \eqref{eq:MSigma} is equal to
	\begin{equation*}
		M(\Sigma^{\csp}) = \begin{pmatrix}
			\int_{P_0}^A \omega_1 & \int_{P_0}^A \omega_2 - \beta_2 & \int_{P_0}^A \omega_3 - \beta_3\\
			0 & D(\Sigma^{\csp})_2 & D(\Sigma^{\csp})_3
			\end{pmatrix},
		\end{equation*}
	where using \eqref{eq:H} we have
	\begin{align*}
			\beta_2 &= 
(-1) \left(i_{\pi_2}(\cA - \cP_0, \cQ_1) \log(2) + i_{\pi_3}(\cA - \cP_0, \cQ_1) \log(3)\right) \\
					&\quad -\zeta_3 i_{\pi_{(\sqrt{-3})}}(\cA - \cP_0, \cQ_2) \log(\sqrt{-3}) -\zeta_3^{-1} i_{\pi_{(\sqrt{-3})}}(\cA - \cP_0, \cQ_2) \log(-\sqrt{-3}) \\
					&= -\log(2) - \frac12 \log(3)
		\end{align*}
	because $A$ reduces onto $\cQ_1$ modulo $2$ and onto $\cQ_1 \cap \cQ_2$ modulo $3$; $P_0$ never reduces to a cusp; $i_{\pi_\bullet}(\cA,\cQ_i)=1$ in all cases; and $\log(\sqrt{-3})=\log(3)/2$.
	A similar calculation shows $\beta_3 = \beta_2$.
	The entries $D(\Sigma^{\csp})_i$ are computed using \eqref{eq:matrixD}:
	\begin{align*}
			D(\Sigma^{\csp})_2 &= %
			-\zeta_3 \log(2\zeta_3 +23) - \zeta_3^{-1} \log(2\zeta_3^{-1} + 23),\\
			D(\Sigma^{\csp})_3 &= -\zeta_3^{-1} \log(2\zeta_3 +23) - \zeta_3 \log(2\zeta_3^{-1} + 23).
		\end{align*}
	We compute the integrals $\int_{P_0}^A \omega_j$ with the method outlined above. Taking $p = 7$, the smallest possible choice, we obtain the matrix
	\begin{equation*}
		M(\Sigma^{\csp})= %
			\begin{pmatrix}
			2\cdot 7 + 5\cdot 7^2 + 4 \cdot 7^4 & 6\cdot 7 + 7^2 + 3\cdot 7^4 & 2\cdot 7 + 6\cdot 7^2 + 2\cdot 7^3  \\ %
			0 & 6\cdot 7^2 + 2\cdot 7^3 + 6\cdot 7^4  & 7 + 4\cdot 7^2 + 7^3 + 5\cdot 7^4 %
			\end{pmatrix}
		\end{equation*}
	displayed with precision $O(7^5)$.
	The element
	\[
	(a_1,a_2,a_3) = (1, 2 + 6\cdot 7 + 2\cdot 7^2 + 3\cdot 7^3 , 2\cdot 7 + 6\cdot 7^2)  + O(7^5) %
	\]
	lies in the kernel of $M(\Sigma^{\csp})$, so the differential $\omega = a_1\omega_1 + a_2 \omega_2 + a_3\omega_3$ vanishes on all $\{487\}$-integral points of reduction type $\Sigma$; note that the constant $c(P_0,\Sigma,\omega)$ is zero by \eqref{eq:constants} and $b=0$ by~\ref{item:b=zero} above.
	We computed the complete vanishing locus of the function $P \mapsto \int_{P_0}^P \omega$ on residue discs with $x \not\equiv -1 \bmod 7$. We did the same for the remaining two cuspidal reduction types with $\Sigma_{487} = (\overline{1} : \overline{254} : \overline{0}) \in \cQ_2(\bF_{487})$ and $\Sigma_{487} = (\overline{1} : \overline{1} : \overline{0}) \in \cQ_1(\bF_{487})$, respectively. The results are shown below:
	\begin{center}
		{\renewcommand{\arraystretch}{1.15}
		\begin{tabular}{|c||c|}
			\hline
			reduction type & Chabauty locus\footnote{in residue discs with $x \not \equiv -1 \bmod 7$} \\
			\hline \hline
			\rule{0pt}{2.5ex} $(\overline{1} : \overline{232} : \overline{0}) \bmod 487$ &
				$(0,0)$, $(\zeta_3,0)$, $(\zeta_3^{-1},0)$, $(\tfrac{216}{487},\tfrac{438}{487})$, \\
			&  
				$(5 + 3\cdot 7 + 4\cdot 7^2 + 7^3 + 6\cdot 7^4 + O(7^5), 2 + O(7))$,\\ %
			&	$(5 + 3\cdot 7 + 6\cdot 7^2 + 4\cdot 7^3 + 4 \cdot 7^4 + O(7^5), 4 + O(7))$ \\ %
			\hline
			\rule{0pt}{2.5ex} $(\overline{1} : \overline{254} : \overline{0}) \bmod 487$ & $(0,0)$, $(\zeta_3,0)$, $(\zeta_3^{-1},0)$, \\
			& $(5 + 4\cdot 7 + 3\cdot 7^2 + 5\cdot 7^3 + O(7^5), 2 + O(7))$,\\ %
			& $(5 + 6\cdot 7^2 + 4\cdot 7^3 + 3\cdot 7^4 + O(7^5), 4 + O(7))$ %
			\\ 
			\hline
			 \rule{0pt}{2.5ex} $(\overline{1} : \overline{1} : \overline{0}) \bmod 487$ & 
			$(0,0)$, $(\zeta_3,0)$, $(\zeta_3^{-1},0)$, $(\tfrac1{18},\tfrac7{18})$, $\bigl(\frac{-2-\sqrt{-3}}{7}, \frac{3+5\sqrt{-3}}{14}\bigr)$, \\
			& $(5 + 2\cdot 7 + 2\cdot 7^3 + 7^4 + O(7^5), 2 + O(7))$,\\ %
			& $(5 + 7 + 5\cdot 7^2 + 5\cdot 7^3 + O(7^5), 4 + O(7))$ \\ %
			\hline
		\end{tabular}}
	\end{center}
	Note that the $\{487\}$-integral point $(\tfrac{216}{487},\tfrac{438}{487})$ is correctly picked up by our Chabauty function. The points $(0,0)$, $(\zeta_3,0)$, $(\zeta_3^{-1},0)$ show up in the locus since they are fixed points of the automorphism $(x,y) \mapsto (x,\zeta_3 y)$ of $Y_{\bQ_p}$, and it follows from the change-of-variables formula that Coleman integrals of log differentials between any two of these points vanish. The appearance of the rational but not $S$-integral point $(\tfrac1{18},\tfrac7{18})$ and of the algebraic but not rational point $(\tfrac{-2-\sqrt{-3}}{7}, \tfrac{3+5\sqrt{-3}}{14})$ is slightly surprising but not an uncommon phenomenon in Chabauty methods. The remaining points appear to be transcendental. With some additional work, using Mordell--Weil sieve techniques as in \cite{BBM:integral-points}, one could try to show that these points are not rational but we have not attempted this.
	The code for this example can be found on \url{https://github.com/martinluedtke/AffChab2}.
\end{example}

\printbibliography

\end{document}